\newtheorem{lemma}{Lemma}[section]
\newtheorem{theorem}[lemma]{Theorem}
\newtheorem{definition}[lemma]{Definition}
\newtheorem{remark}[lemma]{Remark}
\newtheorem{corollary}[lemma]{Corollary}
\def\C{\mathbb{C}}
\def\R{\mathbb{R}}
\def\Q{\mathbb{Q}}
\def\Re{\mathrm{Re}\,}
\def\tr{\mathrm{tr}\,}
\def\Rtr{\mathrm{Retr}\,}
\def\rank{\mathrm{rank}\,}
\def\diag{\mathrm{diag}\,}
\def\vec{\mathrm{vec}}
\def\Im{\mathrm{Im}\,}
\def\Jm{\mathrm{Jm}\,}
\def\Km{\mathrm{Km}\,}
\def\dprod{\prod\limits}
\def\dwedge{\mathop{\bigwedge}\limits}
\def\etr{\mathrm{etr\,}}
\def\re{\mathrm{re}\,}
\def\vol{\mathrm{vol}\,}
\begin{document}
\title{{Zonal} polynomials and hypergeometric functions of quaternion matrix argument
\thanks{Project supported by Natural Science Foundation of China (no.10771069) and Shanghai Leading Academic
Discipline Project(no.B407)}}
\author{Fei Li\\
Department of Mathematics\\
East China University of Science and Technology, Shanghai 200237, P.R. China\\
\ \\
Yifeng Xue\thanks{Corresponding author\newline{\hspace*{0.5cm}}{\bf Keywords.}\ Zonal polynomial, Hypergeometric
function, quaternion matrix, Wishart matrix\newline{\hspace*{0.5cm}}{\bf 2000 Mathematics Subject Classification.}\
62H10, 60E10}\\
Department of Mathematics\\
East China Normal University, Shanghai 200240, P.R.China}
\date{}

\maketitle
\begin{abstract}
{{We define zonal}} polynomials of quaternion matrix argument and
deduce some important formulae of zonal polynomials and
hypergeometric functions of quaternion matrix argument. As an
application, we give the distributions of the largest {{and}}
smallest {{eigenvalues}} of {{a}} quaternion central {{Wishart}}
matrix $W\sim\mathbb{Q}W(n,\Sigma)${{,}} respectively.
\end{abstract}

\section{INTRODUCTION}

Zonal polynomials and hypergeometric {{functions}} of real (or
complex) symmetric matrices early introduced in \cite{her} and
\cite{james1, james2, james3} were used to study the density
functions and the distributions of eigenvalues of Wishart matrices.
Now they {{are}} very useful tools in the study of Multivariate
Statistical Analysis. There are many ways {{of defining }} zonal
polynomials. Some of them {{have}} appeared in \cite{her},
\cite{james1} and \cite{akimi}. Muirhead's definition of zonal
polynomials {{of a}} real matrix argument is an axiomatic definition
{{which}} appeared in \cite{robb}, involving partial differential
operators. This definition is {{easier and more convenient for}}
practical use. Gross and Richards {{defined}} zonal polynomials of
{{a}} matrix argument over the division algebra $\mathbf{F}$,
including the real {{and}} complex fields{{,}} and quaternion
division by means of the representation of groups. Maybe the authors
{{thought}} there were some problems in {{their}} results, {{since}}
they {{do not}} compute the numerical presentations of $C_\kappa(A)$
($A$ is a Hermitian quaternion matrix).

In this paper, we modify the definition of zonal polynomials of
{{a}} real matrix argument given in \cite{robb} and {{define}} zonal
polynomials of a quaternion matrix argument. Then we compute the
presentations of $C_\kappa(A)$. We also define quaternion
hypergeometric functions in terms of zonal polynomials {{of a}}
quaternion matrix argument and {{derive}} some useful formulas for
quaternion hypergeometric functions. Using these results, we give
the distributions of the largest {{and smallest eigenvalues of a
quaternion Wishart}}  matrix $W=A^HA$ (i.e., $W \sim \Q
W_m(n,\Sigma), n\geqslant m$, $A \sim\Q N_{n\times
m}(0,I_n\bigotimes \Sigma)$).

The paper is organized as follows. \S 2 provides the preliminary
tools for deriving our results. The zonal polynomials and
hypergeometric functions of {{a}} quaternion matrix argument will be
studied in \S 3 and \S 4{{,}} respectively. In {{the}} last section,
we will give the distributions of the largest {{and }} smallest
eigenvalues{{.}}

\section{PRELIMINARY }

{{Following}} \cite{zh}, let $\C$ and $\R$ denote the fields of
complex and real numbers{{,}} respectively{{,}} and let $\Q$
{{denote the}} quaternion division {{algebra}} over $\R$, i.e.,
every $a\in\Q$ can be expressed as $a=a_1+a_2i+a_3j+a_4k$, where
$i,j,k$ satisfy {{the}} following relations
$$
i^2=j^2=k^2=-1,\ ij=-ji=k,jk=-kj=i,ki=-ik=j.
$$
Put $a^H=a_1-a_2i-a_3j-a_4k$ and
$\|a\|=(a^Ha)^{1/2}=(a^2_1+a^2_2+a^2_3+a^2_4)^{1/2}$. Let
$\R^{m\times n}$, $\C^{m\times n}$, $\Q^{m\times n}$ denote the set
of all $m\times n$ matrices over $\R$, $\C$ and $\Q${{,}}
respectively. Any $A\in \Q^{m\times n}$ can be written as
$A=(a_{ij})_{m \times n}=A_1+A_2i+A_3j+A_4k$, where $a_{ij} \in\Q$,
{{and}} $A_1,~A_2,~A_3,~A_4\in\R^{m\times n}$. $A_1$ is the real
part of $A$, denoted by $\Re A$. We also set $\Im(A)=A_2,\
\Jm(A)=A_3$ and $\Km(A)=A_4$. Put $A^H=(a_{ji}^H)_{n \times
m}=A_1^{'}-A_2^{'}i-A_3^{'}j-A_4^{'}k$. We {{say}} $A$ is Hermitian
if $A^H=A$. The eigenvalues of {{a}} Hermitian matrix are all real.
If the eigenvalues are all positive, then we {{say}} it is a
positive definite quaternion matrix.

Let $\tr(\cdot)$ be the trace on $\Q^{n\times n}$ and put $\Rtr(A)=\tr(\Re A)$ for $A\in\Q^{n\times n}$. We have
$$
\Rtr(A)=\dfrac{1}{\,2\,}\tr(A+A^H),\ \Rtr(AB)=\Rtr(BA),\quad\forall\, A,\,B\in\Q^{n\times n}.
$$
{{Moreover,}} if $A=A^H\in\Q^{n\times n}$, then
$\Rtr(A)=\tr(A)=\sum\limits^n_{s=1}\lambda_s$, where
$\lambda_1,{{\ldots}}, \lambda_n$ are {{the}} eigenvalues of $A$.
Set ${_qS(n)}=\{A\in\Q^{n\times n}\vert\,A^H=A\}$ and
$$
{_qO(n)}=\{A\in\Q^{n\times n}\vert\,A^HA=AA^H=I_n\},\
{_qV_{n,m}}=\{A\in\Q^{m \times n}\vert\, A^HA=I_n\}.
$$

Let $A\in\Q^{m\times n}$ be
$A=A_{1}+A_{2}i+A_{3}j+A_{4}k=(A_{1}+A_{2}i)+(A_{3}+A_{4}i)j=B_{1}+B_{2}j$.
$A$ has the
complex representation
$A^{\sigma}=\begin {pmatrix}B_{1}&-B_{2}\\
\overline{B_{2}}&\overline{B_{1}}\end{pmatrix}$
${(\overline{B_1}=A_1-A_2i,\ \overline{B_2}=A_3-A_4i)}$ and the real
representations
$$
{_1A}=\left({\begin{array}{rrrr}
A_{1}&A_{2}&A_{3}&A_{4}\\
 -A_{2}&A_{1}&-A_{4}&A_{3}\\-A_{3}&A_{4}&A_{1}&-A_{2}\\-A_{4}&-A_{3}&A_{2}&A_{1}\end{array}}\right)
\quad\text{and}\ {_{2}A}=\left({\begin{array}{rrrr}A_{1}
&-A_{2}&-A_{3}&-A_{4}\\
A_{2}&A_{1}&-A_{4}&A_{3}\\A_{3}&A_{4}&A_{1}&-A_{2}\\A_{4}&-A_{3}&A_{2}&A_{1}\end{array}}
\right).
$$

For $A\in\Q^{m\times n}$, denote by $|A|_q=\det(A^\sigma)$ and
$|A|_d=|A^HA|$ the q--determinant and double determinant of $A${{,}}
respectively{{;}} here $|\cdot\,|$ is the determinant of a square
quaternion matrix given in \cite{zh}. We have $|A^H|_d=|A|_d$ and
$|A|_d=|A|_q$ (cf. \cite{zh}). Moreover, we have
\begin{lemma}\label{Lax}
Let $A\in\Q^{n\times n}$.
\begin{enumerate}
\item[$(1)$] $|A|_d^2=\det({_1A})=\det({_2A});$
\item[$(2)$] Let $A=T^HT$, where $T=(t_{ij})_{n\times n}\in \Q^{n \times n}$ is {{an}} upper-triangular
matrix with $t_{ii}>0${{,}} $i=1,{{\ldots}},m$. Then
$|A|=t^2_{11}\cdots t^2_{nn}$.
\end{enumerate}
\end{lemma}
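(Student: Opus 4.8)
The plan is to pass everything through the real representations, which convert quaternion matrix identities into ordinary real-matrix identities where classical determinant theory applies. For part (1), I would first recall (or verify directly from the definitions) that the maps $A\mapsto{_1A}$ and $A\mapsto{_2A}$ are ring homomorphisms from $\Q^{n\times n}$ into $\R^{4n\times 4n}$, so in particular they are multiplicative and send $I_n$ to $I_{4n}$. Next I would check how they interact with the Hermitian conjugate: a short computation on the $4\times 4$ block pattern shows ${_1(A^H)}=({_1A})^{\mathsf T}$ and likewise ${_2(A^H)}=({_2A})^{\mathsf T}$. The key bridge is the relation between $|A|_d=|A^HA|$ and the $q$-determinant $\det(A^\sigma)$, which the excerpt has already granted us ($|A|_d=|A|_q$), together with the standard fact relating the complex representation $A^\sigma$ to the real representations; I would use whichever of these is cleanest to obtain $\det({_1A})=|A^\sigma|\cdot\overline{|A^\sigma|}=|A|_q^2=|A|_d^2$, and the same for ${_2A}$. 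An alternative, more self-contained route: $\det({_1(A^HA)})=\det({_1A^H})\det({_1A})=\det(({_1A})^{\mathsf T})\det({_1A})=\det({_1A})^2\ge 0$; since ${_1(A^HA)}$ is a real symmetric (indeed PSD) matrix built blockwise from $A^HA$, its determinant equals $|A^HA|^{?}$ — one must pin down the exponent by testing on $A^HA=\lambda I_n$ with $\lambda\ge 0$, which gives $\det({_1(\lambda I_n)})=\lambda^{4n}$ versus $|A^HA|=\lambda^n$, forcing $\det({_1(A^HA)})=|A^HA|^4=|A|_d^4$, hence $\det({_1A})^2=|A|_d^4$ and $\det({_1A})=|A|_d^2$ after fixing the sign (the left side is a nonnegative square, and on $A=I_n$ both sides equal $1$, and continuity/connectedness of $GL$ pins the global sign). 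The same argument verbatim handles ${_2A}$.

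For part (2), I would proceed directly on the quaternion side using Lemma-style manipulations of the determinant $|\cdot|$ of a square quaternion matrix from \cite{zh}. Since $T$ is upper-triangular with positive real diagonal entries $t_{ii}$, the matrix $A=T^HT$ is positive definite Hermitian, and I would argue $|A|=|T^HT|=|T|_d^2$-type reasoning is circular, so instead: use part (1) together with ${_1(T^HT)}$ to write $|A|_d^2=\det({_1(T^HT)})=\det(({_1T})^{\mathsf T}{_1T})=\det({_1T})^2$. Now ${_1T}$ is, up to a simultaneous permutation of rows and columns, block upper-triangular with $4\times 4$ diagonal blocks ${_1(t_{ii})}$ (the $4\times 4$ real representation of the scalar quaternion $t_{ii}$), and for a real scalar $t_{ii}>0$ one has ${_1(t_{ii})}=t_{ii}I_4$, so $\det({_1T})=\prod_{i}\det(t_{ii}I_4)=\prod_i t_{ii}^4=(t_{11}\cdots t_{nn})^4$. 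Hence $|A|_d^2=(t_{11}\cdots t_{nn})^8$, giving $|A|_d=(t_{11}\cdots t_{nn})^4$; and since for Hermitian $A$ the double determinant and the quaternion determinant are related by $|A|_d=|A^HA|=|A|^2$ (as $A^H=A$), wait — that would give $|A|=(t_{11}\cdots t_{nn})^2$, which is exactly the claim. I would double-check the identity $|A^HA|=|A|^2$ for Hermitian $A$ against the normalization in \cite{zh}; if the convention there is instead $|A|=|A|_d$ for Hermitian $A$, the bookkeeping shifts but the final exponent $2$ should still emerge from the $4\times4$ block structure once the permutation reordering ${_1T}$ into block-triangular form is carried out carefully.

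The main obstacle I expect is the bookkeeping of normalizations and exponents: there are several competing ``determinants'' in play ($|\cdot|$, $|\cdot|_q$, $|\cdot|_d$, the $4n\times 4n$ real determinant, the $2n\times 2n$ complex determinant of $A^\sigma$), each scaling as a different power of the ``true'' quaternion determinant, and getting the powers of $2$ and $4$ right is the crux. I would resolve this by (a) fixing conventions once and for all from \cite{zh}, (b) checking every claimed identity on the two test cases $A=\lambda I_n$ ($\lambda>0$ real) and $A=\mathrm{diag}(t_{11},\dots,t_{nn})$ with real $t_{ii}>0$, which already exhibit the generic exponent behavior, and (c) handling the sign ambiguity in part (1) by the connectedness argument (the real representations map the connected group of invertible quaternion matrices into $GL^+(4n,\R)$, and both sides agree at the identity). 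The permutation similarity that turns ${_1T}$ into genuinely block upper-triangular form should be spelled out explicitly, since the raw $4\times4$ pattern in the definition of ${_1A}$ interleaves the blocks $A_1,A_2,A_3,A_4$ rather than stacking them, but this is a fixed relabeling independent of $T$ and costs only a sign that cancels in the squared identity.
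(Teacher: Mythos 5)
Your plan for part (1) follows essentially the paper's route: the paper also reduces $\det({_1A})=\det({_2A})$ to $|A|_q^2=|A|_d^2$ via the complex representation, except that instead of quoting the standard realification fact it carries out the block computation explicitly, writing $A^\sigma=S_1+S_2i$ with $S_1=\left(\begin{smallmatrix}A_1&-A_3\\A_3&A_1\end{smallmatrix}\right)$, $S_2=\left(\begin{smallmatrix}A_2&-A_4\\-A_4&-A_2\end{smallmatrix}\right)$, and manipulating $\det\left(\begin{smallmatrix}S_1\pm S_2 i&\\&S_1\mp S_2 i\end{smallmatrix}\right)$ into $\det({_1A})$ and $\det({_2A})$. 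Your ``more self-contained'' alternative has a soft spot: the exponent in $\det({_1(A^HA)})=|A^HA|^{?}$ cannot be pinned down by testing on $\lambda I_n$ alone; you would need the unitary diagonalization of $A^HA$ together with multiplicativity of ${_1(\cdot)}$ to see that each eigenvalue occurs with multiplicity $4$, so this route is really the first route in disguise plus extra work.

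For part (2) you take a genuinely different path, and the place where you write ``wait --- I would double-check'' is precisely where it becomes nontrivial. Pushing $A=T^HT$ through the $4n\times 4n$ real representation gives $|A|_d=(t_{11}\cdots t_{nn})^4$, and to descend to $|A|$ you must invoke $|A|_d=|A|^2$ for Hermitian $A$. That identity does hold in Zhang's convention (for Hermitian $A$ with eigenvalues $\lambda_i$ one has $|A|=\prod_i\lambda_i$ while $|A|_d=\det(A^\sigma)=\prod_i\lambda_i^2$), but it is an additional lemma your plan leaves unproved, and if the normalization went the other way the final exponent would be $4$, not $2$ --- so it is the crux, not bookkeeping. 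The paper sidesteps it entirely: since $A=T^HT$, the equality $|A|=|T^HT|=|T|_d$ is just the \emph{definition} of the double determinant of $T$, and the already-cited fact $|T|_d=|T|_q=\det(T^\sigma)$ reduces the computation to the $2n\times 2n$ complex matrix $T^\sigma=\left(\begin{smallmatrix}T_1&-T_2\\ \overline{T_2}&\overline{T_1}\end{smallmatrix}\right)$, which after the obvious permutation is block upper triangular with diagonal blocks $t_{ii}I_2$, yielding $\prod_i t_{ii}^2$ directly. If you keep your route, first prove $|A|=\prod_i\lambda_i$ for Hermitian $A$ (hence $|A|_d=|A|^2$); otherwise switch to the complex representation of $T$ as the paper does.
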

\begin{proof}
(1) Set $S_1=\begin{pmatrix}A_{1}&-A_{3}\\ A_{3}&A_{1}\end{pmatrix}$, $S_2=\begin{pmatrix}A_{2}&-A_{4}\\-A_{4}&-A_{2}
\end{pmatrix}$. Then by the proof of \cite[Lemma 3.1]{math},
\begin{align*}
|A|_d^2=|A|_q^2&
=\Bigg\vert\det\begin{pmatrix}A_{1}+A_{2}i & -A_{3}-A_{4}i \\A_{3}-A_{4}i & A_{1}-A_{2}i \end{pmatrix}\Bigg\vert^2
=\det(S_1+S_2i)\det(S_1-S_2i)\\
&=\det\begin{pmatrix}S_1+S_2i\\ &S_1-S_2i\end{pmatrix}=\det\begin{pmatrix}2S_1&-iS_2\\ -iS_2&\frac{1}{2}S_1\end{pmatrix}.
\end{align*}
Note that
$$
\begin{pmatrix}2S_1&-iS_2\\ -iS_2&\frac{1}{2}S_1\end{pmatrix}=
\begin{pmatrix}2\\ &-i\end{pmatrix}\begin{pmatrix}S_1&S_2\\ -S_2&S_1\end{pmatrix}\begin{pmatrix}1\\ &\frac{i}{2}
\end{pmatrix}=\begin{pmatrix}2\\ &i\end{pmatrix}\begin{pmatrix}S_1&-S_2\\ S_2&S_1\end{pmatrix}
\begin{pmatrix}1\\ &-\frac{i}{2}\end{pmatrix}.
$$
So $|A|_d^2=\det\begin{pmatrix}S_{1}&S_{2}\\-S_{2}&S_{1}\end{pmatrix}=\det\begin{pmatrix}S_{1}&-S_{2}\\ S_{2}&S_{1}
\end{pmatrix}=\det({_1A})=\det({_2A})$.

(2) We have
$|A|=|T|_d=|T|_q=\det(T^\sigma)=\det\begin{pmatrix}T_1&-T_2\\
\overline{T_2}&\overline{T_1}\end{pmatrix}$, where $T=T_1+T_2j$ with
$T_1, T_2\in\mathbb{C}^{n\times n}$ and $T_1$, $T_2$ have the form
$$
T_1=\begin{pmatrix}t_{11}&\\ _{\mbox{\bf\huge 0}}&\ddots
&^{\mbox{\bf\huge *}}\\ & &t_{nn}\end{pmatrix},\quad
T_2=\begin{pmatrix}0&\\ _{\mbox{\bf\huge 0}}&\ddots
&^{\mbox{\bf\huge *}}\\ & & 0\end{pmatrix}{{,}}
$$
{{respectively. A simple }}computation shows that $\det\begin{pmatrix}T_1&-T_2\\
\overline{T_2}&\overline{T_1}\end{pmatrix} =t^2_{11}\cdots
t^2_{nn}$.
\end{proof}

Let $X=X_1+X_2i+X_3j+X_4k \in \mathbb{Q}^{m\times n}$ and
$X_1,\,X_2,\,X_3,\,X_4$ are $m\times n$ matrices of functionally
independent real variables. Define the volume of $X$ as
$(dX)=(dX_1)\bigwedge(dX_2)\bigwedge(dX_3)\bigwedge(dX_4)$, where
$(dX_s)${{,}} $s=1,2,3,4${{,}} are defined in \cite{math}.

\begin{lemma}\label{Lbx}
$X,~Y\in \mathbb{Q}^{m\times n}$ and $Y=AXB$ where
$A\in\mathbb{Q}^{m\times m}$ {{and}} $B\in\mathbb{Q}^{n\times n}$
are constant invertible matrices.
\begin{enumerate}
\item[$(1)$] We have $(dY)=|A|_q^{2n}|B|_q^{2m}(dX);$
\item[$(2)$] Suppose $X\in{_qS(m)}$ and $B=A^H$. Then $(dY)=|A|_q^{2m-1}(dX)$.
\end{enumerate}
\end{lemma}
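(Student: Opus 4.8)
The plan is to reduce both parts to computing the determinant of a real-linear map, and to invoke Lemma~\ref{Lax}.

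For part~(1) I would factor $Y=AXB=A(XB)$ and treat the two one-sided cases separately. For $Y=AX$: the assignment $X\mapsto AX$ is $\R$-linear on $\Q^{m\times n}\cong\R^{4mn}$ and acts in the same way on each of the $n$ columns of $X$; writing a column as $x_1+x_2i+x_3j+x_4k$ ($x_s\in\R^m$) and multiplying out $Ax$ by the quaternion relations shows that, in the coordinates $(x_1,x_2,x_3,x_4)$, left multiplication by $A$ is exactly the $4m\times4m$ real matrix ${}_2A$. Hence the Jacobian matrix of $X\mapsto AX$, in the coordinates underlying $(dX)$, is conjugate to the block-diagonal matrix with $n$ copies of ${}_2A$, so its determinant equals $(\det{}_2A)^n=|A|_q^{2n}$ by Lemma~\ref{Lax}(1), giving $(dY)=|A|_q^{2n}(dX)$. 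For $Y=XB$ I would pass to conjugate transposes: $Y=XB$ is equivalent to $Y^H=B^HX^H$, the map $Z\mapsto Z^H$ changes the volume element only by a global sign (the same for the $X$- and $Y$-copies, hence irrelevant), and $X^H\mapsto B^HX^H$ is left multiplication by $B^H\in\Q^{n\times n}$ on $\Q^{n\times m}$; so the case just proved supplies $(dY)=|B^H|_q^{2m}(dX)=|B|_q^{2m}(dX)$, using $|B^H|_d=|B|_d=|B|_q$. Composing the two maps yields $(dY)=|A|_q^{2n}|B|_q^{2m}(dX)$.

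For part~(2), let $J(A)=\det\Phi_A$, where $\Phi_A\colon X\mapsto AXA^H$ is the $\R$-linear automorphism of the real vector space ${}_qS(m)$ (of dimension $m(2m-1)$). Because $\Phi_{A_1A_2}=\Phi_{A_1}\circ\Phi_{A_2}$, the map $A\mapsto J(A)$ is multiplicative; and $A\mapsto|A|_q^{2m-1}$ is multiplicative too, since $\sigma$ is a ring homomorphism, so $|A_1A_2|_q=\det(A_1^\sigma A_2^\sigma)=|A_1|_q|A_2|_q$. Hence it suffices to verify $J(A)=|A|_q^{2m-1}$ for $A$ ranging over a generating set of $\mathrm{GL}_m(\Q)$, and I would use the elementary matrices $I_m+cE_{pq}$ ($p\ne q$, $c\in\Q$) together with $\diag(d,1,\dots,1)$ ($d\in\Q^{*}$). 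If $A=I_m+cE_{pq}$, then $(A-I_m)^2=(cE_{pq})^2=0$, so the commuting operators $X\mapsto AX$ and $X\mapsto XA^H$ on $\Q^{m\times m}$ are unipotent, whence $\Phi_A$ and its restriction to the invariant subspace ${}_qS(m)$ are unipotent, giving $J(A)=1$; moreover $|A|_q=\det(A^\sigma)=1$ because $(A^\sigma-I_{2m})^2=\bigl((A-I_m)^2\bigr)^\sigma=0$. If $A=\diag(d,1,\dots,1)$, then $\Phi_A$ multiplies the real diagonal entry $X_{11}$ by $\|d\|^2$, carries each of the $m-1$ quaternion entries $X_{1q}$ ($q\ge2$) to $dX_{1q}$ (a left multiplication of real Jacobian $\|d\|^4=|d|_q^2$), and fixes every other entry, so $J(A)=\|d\|^2(\|d\|^4)^{m-1}=\|d\|^{4m-2}$; on the other hand $A^HA=\diag(\|d\|^2,1,\dots,1)$, so $|A|_q=|A|_d=\|d\|^2$ and $|A|_q^{2m-1}=\|d\|^{4m-2}$. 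Agreement on the generators finishes part~(2).

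The content is elementary; the delicate points I expect to dwell on are, in part~(1), the orientation bookkeeping for the conjugate-transpose substitution (which ultimately contributes only a cancelling global sign, the final coefficients being the nonnegative numbers $|A|_q^{2n}$ and $|B|_q^{2m}$), and, in part~(2), the standard facts that the elementary matrices together with $\diag(d,1,\dots,1)$ generate $\mathrm{GL}_m(\Q)$ and that the restriction of a unipotent operator to an invariant subspace has determinant $1$. As an alternative to the generating-set argument one may instead factor $A=UT$ with $U\in{}_qO(m)$ and $T$ upper triangular with positive diagonal, observe that $J(U)=1$ because $X\mapsto UXU^H$ preserves the bilinear form $\tr(XY)$ on ${}_qS(m)$, and then compute $J(T)$ exactly as in the diagonal case after writing $T=\diag(t_{11},\dots,t_{mm})$ times a product of elementary triangular matrices, using $|T|_q=t_{11}^2\cdots t_{mm}^2$ from Lemma~\ref{Lax}(2).
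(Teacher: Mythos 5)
Your proposal is correct and follows essentially the same route as the paper: part (1) reduces the Jacobian to the determinant of the real representation (${}_2A$ acting columnwise, versus the paper's equivalent $\vec$/Kronecker formulation with ${}_1B$ in place of your conjugate-transpose trick) and invokes Lemma~\ref{Lax}(1), while part (2) decomposes $A$ into elementary quaternion matrices exactly as the paper does via \cite[Theorem 4.3]{zh}. The only substantive difference is that you carry out the generator-by-generator verification (unipotent transvections and $\diag(d,1,\dots,1)$) explicitly, whereas the paper defers those details to the method of \cite[Theorem 1.20]{math}.
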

\begin{proof}
(1) Let $Y=AW$ and $W=XB$. Then $dY=AdW$, $dW=dX\,B$ and
\begin{align*}
(dY'_{1},dY'_{2},dY'_3,dY'_{4})'&=(_2A)(dW'_{1},dW'_{2},dW'_3,dW'_{4})'\\
(dW_1,dW_2,dW_3,dW_4)&=(dX_1,dX_2,dX_3,dX_4)(_1B).
\end{align*}
Using the operator $\mathrm{vec}(\cdot)$ (defined in
\cite[Definition 1.2]{math}) to $dX_s,\ dY_s$ and $dW_s$,
$s=1,{{\ldots}},4$, we have
$$
\begin{pmatrix} \vec(dY_{1})\\ \vec(dY_{2})\\ \vec(dY_3)\\ \vec(dY_{4})\end {pmatrix}=
{_2(I\otimes A)}\begin {pmatrix}\vec(dW_{1})\\ \vec(dW_{2})\\ \vec(dW_3)\\ \vec(dW_{4})\end {pmatrix},\
\begin{pmatrix} \vec(dW_{1})\\ \vec(dW_{2})\\ \vec(dW_3)\\ \vec(dW_{4})\end {pmatrix}=
((_1B)'\otimes I)\begin {pmatrix}\vec(dX_{1})\\ \vec(dX_{2})\\ \vec(dX_3)\\ \vec(dX_{4})\end {pmatrix}
$$
by \cite[Lemma 1.1]{math} so that
$$
\begin{pmatrix}\vec(dY_{1})\\ \vec(dY_{2})\\ \vec(dY_3)\\ \vec(dY_{4})\end {pmatrix}=
{_2(I\otimes A)}((_1B)'\otimes I)\begin {pmatrix}\vec(dX_{1})\\ \vec(dX_{2})\\ \vec(dX_3)\\ \vec(dX_{4})\end {pmatrix}.
$$
Thus by \cite[Lemma 1.2]{math} and Lemma \ref{Lax},
$$
(dY)=|{_1A}|^n|{_2B}|^m(dX)=|A|_q^{2n}|B|_q^{2m}(dX).
$$

(2) Since $A$ is invertible, it follows from \cite[Theorem 4.3]{zh} that $A$ is the product of elementary
quaternion matrices. Thus using the same method as in the proof of \cite[Theorem 1.20]{math}, we can get the assertion.
\end{proof}

{{The following}} two lemmas, which come from {{\cite[p37,
p38]{Di}}}, will be used in {{this}} paper:
\begin{lemma}\label{lemma1.3}
$X\in{_qS(m)}$ with $X>0$. Suppose $X=T^HT$, where
$T=(t_{ij})_{m\times m}\in \Q^{m \times m}$ is {{an}}
upper-triangular matrix with real diagonal elements. Then
$$
(dX)=2^m\prod_{i=1}^{m}t_{ii}^{4(m-i)+1}(dT),
$$
where
$(dT)=\dwedge^m_{s=1}d\,t_{ss}\dwedge^4_{p=1}\dwedge^m_{s<t}d\,~t^{(p)}_{st}$,
$t_{st}=t^{(1)}_{st}+ t^{(2)}_{st}i+t^{(3)}_{st}j+t^{(4)}_{st}k$,
$s<t,~ t=1,{{\ldots}},m$.
\end{lemma}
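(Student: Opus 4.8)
The plan is to prove the Jacobian formula in Lemma~\ref{lemma1.3} by the same triangular ``peeling'' argument that works over $\R$ and $\C$, tracking carefully the real dimension $4$ contributed by each strictly upper-triangular quaternion entry. Write $X=T^HT$ with $T=(t_{ij})$ upper-triangular, $t_{ii}>0$ real, and $t_{ij}=t_{ij}^{(1)}+t_{ij}^{(2)}i+t_{ij}^{(3)}j+t_{ij}^{(4)}k$ for $i<j$. Since $X$ is Hermitian, its free real parameters are the $m$ real diagonal entries $x_{ii}$ together with the four real components of each $x_{ij}$, $i<j$; likewise $T$ has $m$ real diagonal parameters and $4\binom{m}{2}$ off-diagonal ones, so the two parameter spaces have equal dimension and it makes sense to compute $(dX)/(dT)$ as the absolute value of a Jacobian determinant.

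First I would expand the $(s,t)$ entry of $X=T^HT$ for $s\le t$: one gets $x_{st}=\sum_{r\le s} t_{rs}^H t_{rt}$, which isolates the ``new'' variable $t_{st}$ with coefficient $t_{ss}$, namely $x_{st}=t_{ss}t_{st}+(\text{terms involving only }t_{rs},t_{rt}\text{ with }r<s)$, and in particular $x_{ss}=t_{ss}^2+\sum_{r<s}\|t_{rs}\|^2$. The key structural point is the ordering: list the entries of $X$ and of $T$ column by column (or equivalently by increasing $t-s$ and then increasing $s$), so that when we differentiate $x_{st}$ with respect to the variables associated to $t_{st}$, every earlier-listed $X$-variable depends only on earlier-listed $T$-variables. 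This makes the Jacobian matrix block lower-triangular, so its determinant is the product of the diagonal blocks: a $1\times 1$ block $\partial x_{ss}/\partial t_{ss}=2t_{ss}$ for each diagonal entry, and a $4\times 4$ block $\partial(\re x_{st},\im x_{st},\jm x_{st},\km x_{st})/\partial(t_{st}^{(1)},\dots,t_{st}^{(4)})$ for each strictly-upper entry. That $4\times 4$ block is precisely the matrix of left multiplication by $t_{ss}$ on $\Q\cong\R^4$, whose determinant is $\|t_{ss}\|^4=t_{ss}^4$ (here one uses Lemma~\ref{Lax}(1), or a direct check, that the real matrix representing multiplication by a quaternion $a$ has determinant $\|a\|^4$).

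Collecting the diagonal contributions then gives $(dX)=\prod_{s=1}^m 2t_{ss}\cdot\prod_{s<t} t_{ss}^4\,(dT)$; counting how many times $t_{ss}$ appears, it occurs once from the diagonal factor and once for each $t$ with $t>s$, i.e.\ $m-s$ times with exponent $4$, so the total exponent of $t_{ss}$ is $4(m-s)+1$, and the factors of $2$ multiply to $2^m$. This is exactly
$$
(dX)=2^m\prod_{i=1}^{m}t_{ii}^{4(m-i)+1}(dT),
$$
with $(dT)$ as written. The main obstacle—and the step deserving the most care—is justifying the block-triangular structure rigorously: one must fix the lexicographic ordering of entries so that ``lower'' $X$-variables genuinely do not involve ``higher'' $T$-variables, and one must verify that the off-diagonal $4\times4$ block really is left-multiplication by the \emph{real} scalar $t_{ss}$ and not some twisted version (this is where $t_{ii}$ being real, rather than merely a unit quaternion, is used). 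Everything else is bookkeeping, and the exponent count and the power of $2$ then fall out automatically. Alternatively, one could deduce the same formula from Lemma~\ref{Lbx} by an induction on $m$, splitting off the last row and column of $T$, but the direct Jacobian computation above is the cleanest route.
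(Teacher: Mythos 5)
Your argument is correct, but it cannot be compared with a proof in the paper because the paper gives none: Lemma~\ref{lemma1.3} is simply quoted from Dumitriu's thesis (\cite[p37, p38]{Di}), so you have supplied a self-contained derivation where the authors only cite. Your route is the standard one for Cholesky-type Jacobians at general Dyson index: expand $x_{st}=t_{ss}t_{st}+\sum_{r<s}t_{rs}^Ht_{rt}$, order the variables so the Jacobian is block lower-triangular, and read off the diagonal blocks $2t_{ss}$ (from $x_{ss}=t_{ss}^2+\sum_{r<s}\|t_{rs}\|^2$) and $t_{ss}I_4$ (left multiplication by the real scalar $t_{ss}$ on $\Q\cong\R^4$, determinant $t_{ss}^4$). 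The exponent count $1+4(m-s)$ and the factor $2^m$ then match the statement, and your observation that the reality of $t_{ss}$ is what makes the $4\times4$ block a scalar matrix is exactly the right point to flag. This is also precisely the $\beta=4$ instance of the general $\beta$-ensemble formula $2^m\prod t_{ii}^{\beta(m-i)+1}$ proved in \cite{Di}, so your proof buys the reader something the paper does not: a verifiable argument in place of a pointer.

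One small correction: your parenthetical claim that ordering ``column by column'' is equivalent to ordering ``by increasing $t-s$ and then increasing $s$'' is false, and the latter ordering does not make the Jacobian triangular. For example $x_{22}=t_{22}^2+\|t_{12}\|^2$ depends on $t_{12}$, whose associated $X$-entry $x_{12}$ would be listed \emph{after} $x_{22}$ in the diagonal-first ordering, destroying lower-triangularity. Your primary (column-by-column, rows increasing within each column) ordering does work, since $x_{st}$ involves only entries of columns $s\le t$ of $T$ up to row $s$, all of which are introduced at or before the position of $x_{st}$; so the proof stands once the parenthetical is deleted.
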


\begin{lemma}\label{lemma1.6}
Let $Z=H_1T\in \mathbb{Q}^{n \times m}$ with $H_1\in{_qV_{m,n}}$,
here $T$ the upper triangular matrix with positive diagonal
elements.  Then we have
$$(dZ)=\prod_{i=1}^{m}t_{ii}^{4(n-i)+3}(dT)\wedge (H_1^HdH_1),$$
where $(H_1^HdH_1)=\dwedge^m_{s=1}\dwedge^n_{t=s+1}h_t^Hd\,h_s$ for
$H=(H_1|H_2)=(h_1,{{\ldots}},h_m|h_{m+1},{{\ldots}}, h_n)$.
\end{lemma}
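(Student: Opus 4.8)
The plan is to reduce the identity to an exterior-algebra computation performed after left-multiplying $Z$ by an orthogonal completion of $H_1$. First I restrict to the dense open set on which $Z$ has full column rank $m$; there the factorization $Z=H_1T$ with $H_1\in{_qV_{m,n}}$ and $T$ upper triangular with positive real diagonal is unique ($T$ is the unique upper-triangular, positive-real-diagonal square root of $Z^HZ$, cf. Lemma~\ref{Lax}(2), and $H_1=ZT^{-1}$), so $(H_1,T)\mapsto Z$ is a diffeomorphism onto its image and its Jacobian is well defined. Next I complete $H_1$ to $H=(H_1\mid H_2)\in{_qO(n)}$; from $H^HH=I_n$ we get $|H|_d=|H|_q=1$, hence $|H^H|_q=1$, so by Lemma~\ref{Lbx}(1) (with $A=H^H$, $B=I_m$), applied at each point with the constant matrix $H^H$, $(d(H^HZ))=|H^H|_q^{2m}(dZ)=(dZ)$. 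It therefore suffices to evaluate the exterior product of the real components of the entries of $H^HdZ$.

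Differentiating $Z=H_1T$, keeping the order of factors (the product rule is valid for quaternion-valued forms), gives $dZ=(dH_1)T+H_1(dT)$, and since $H^HH_1=\begin{pmatrix}I_m\\0\end{pmatrix}$,
$$
H^HdZ=\begin{pmatrix}(H_1^HdH_1)\,T+dT\\[2pt](H_2^HdH_1)\,T\end{pmatrix}.
$$
Write $S=H_1^HdH_1$ and $R=H_2^HdH_1$. Differentiating $H_1^HH_1=I_m$ shows $S^H=-S$, so $S$ is skew-Hermitian with purely imaginary diagonal; its independent real components, together with those of $R$, are exactly the $1$-forms making up the Stiefel volume element $(H_1^HdH_1)$, namely the four components of each $h_t^Hdh_s$ with $s<t$ and the three imaginary components of each $h_s^Hdh_s$. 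Thus $(dZ)$ and $(dT)\wedge(H_1^HdH_1)$ are built from the same $4nm$ real $1$-forms.

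The crux is the identity
$$
\bigwedge(\text{real components of the entries of }H^HdZ)=\pm\prod_{i=1}^{m}t_{ii}^{\,4(n-i)+3}\;(dT)\wedge(H_1^HdH_1),
$$
which I would establish by sweeping the columns $j=1,\dots,m$ of $H^HdZ$. Because $T$ is upper triangular, the $(i,j)$ entry of the top block is $dt_{ij}+\sum_{k\le j}S_{ik}t_{kj}$ when $i\le j$ and $\sum_{k\le j}S_{ik}t_{kj}$ when $i>j$, and the $(r,j)$ entry of the bottom block is $\sum_{k\le j}R_{rk}t_{kj}$. Inside the exterior product I may add to column $j$ any real-linear combination of components of columns $1,\dots,j-1$; this kills every term with $k<j$, and for $i<j$ also the term $S_{ij}t_{jj}$ (which is $-(S_{ji}t_{jj})^H$ with $S_{ji}$ in column $i<j$), so column $j$ reduces to: $dt_{ij}$ for $i<j$; $dt_{jj}+S_{jj}t_{jj}$ (real part $dt_{jj}$, three imaginary parts $t_{jj}$ times the imaginary part of $S_{jj}$); $S_{ij}t_{jj}$ for $j<i\le m$; and $R_{rj}t_{jj}$ for every $r$. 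These are precisely the column-$j$ variables on the right-hand side, the corresponding block of the change of variables being the identity on the $dt_{ij},dt_{jj}$ and multiplication by $t_{jj}$ on the remaining $3+4(m-j)+4(n-m)=4(n-j)+3$ components. So column $j$ contributes the factor $t_{jj}^{\,4(n-j)+3}$; multiplying over $j$ and using $(dZ)=(d(H^HZ))$ yields the lemma.

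I expect the main obstacle to be the bookkeeping in this last step: identifying precisely which real components are ``new'' in column $j$, verifying that the residual change-of-variables matrix is block lower triangular with exactly the stated diagonal blocks (so no extra powers of $t_{ij}$ with $i<j$, and no cross terms, survive), and tracking the sign, which with the conventions in force is $+1$ (alternatively one argues with absolute values, these being volume elements). Note in particular that the ``$+3$'' in the exponent comes entirely from the three imaginary components of the diagonal entries $h_s^Hdh_s$ of $H_1^HdH_1$, which must be counted in the volume element. A minor point, mentioned above, is the legitimacy of freezing the $H_1$-dependent matrix $H$ when invoking the volume-invariance of $X\mapsto H^HX$; this is harmless since the claim is a pointwise identity of Jacobians. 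As a consistency check, combining this lemma with Lemma~\ref{lemma1.3} through $Z^HZ=T^HT$ recovers the quaternion polar-decomposition Jacobian.
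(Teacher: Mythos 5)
Your proof is correct, and it supplies an argument that the paper itself does not give: for Lemma~\ref{lemma1.6} the authors offer no proof, only the citation to \cite[pp.\ 37--38]{Di}, so the only ``comparison'' to be made is with that reference. What you have written is the standard QR--Jacobian computation (the quaternion analogue of the real argument in \cite{robb} and of the $\beta=4$ case in \cite{Di}): complete $H_1$ to $H=(H_1\vert H_2)\in{_qO(n)}$, use $|H^H|_q=1$ together with Lemma~\ref{Lbx}(1) to replace $(dZ)$ by the wedge of the real components of $H^HdZ=\begin{pmatrix}ST+dT\\ RT\end{pmatrix}$ with $S=H_1^HdH_1$ skew--Hermitian, and sweep the columns. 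I have checked your column-$j$ bookkeeping: after eliminating forms already consumed by columns $1,\dots,j-1$ (including $S_{ij}t_{jj}=-(t_{jj}S_{ji})^H$ for $i<j$, legitimate because $t_{ii}>0$ makes the column-$i$ factors span the components of $S_{ji}$), the new one-forms are $dt_{ij}$ for $i\le j$, $t_{jj}$ times the three imaginary components of $S_{jj}$, and $t_{jj}$ times the $4(m-j)+4(n-m)$ components of $S_{ij}$ ($i>j$) and of $R_{\cdot j}$; this yields the factor $t_{jj}^{4(n-j)+3}$ and a total of $4nm$ one-forms, as required. Your remarks on freezing the point-dependent $H$ (a pointwise Jacobian identity) and on working up to sign are the right level of care.

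One point you handle correctly that the statement, as printed, does not: the displayed definition $(H_1^HdH_1)=\dwedge_{s=1}^m\dwedge_{t=s+1}^n h_t^Hdh_s$ contains only $4nm-2m^2-2m$ one-forms, which is $3m$ short of $\dim{_qV_{m,n}}=4nm-2m^2+m$, so the two sides of the lemma would have different degrees. The three imaginary components of each diagonal entry $h_s^Hdh_s$ must be adjoined to the Stiefel volume element --- exactly as you observe, these are the source of the ``$+3$'' in the exponent, and they are also needed for the paper's own formula $\vol(V_{1,n})=2\pi^{2n}/\Gamma(2n)=\vol(S^{4n-1})$ to come out right. Nothing essential is missing from your argument.
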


In this paper, we shall use the {{singularvalue}} decomposition
(SVD) of a matrix in $\Q^{m\times n}$ as follows. Let
$A\in\Q^{m\times n}$ with $\rank A=r$. Then there are $U=(U_1\vert
U_2)\in{_qO(m)}$, $V=(V_1\vert V_2)\in{_qO(n)}$, with
$U_1\in{_qV_{r,m}},\ V_1\in{_qV_{r,n}}$ such that
\begin{equation}\label{1ex}
A=U\begin{pmatrix} D&0\\ 0&0\end{pmatrix}V^H=U_1DV_1^H
\end{equation}
(\cite[Theorem 7.2]{zh}), where
$D=\diag(\lambda_1,{{\ldots}},\lambda_r)$ and
$\lambda_1,{{\ldots}},\lambda_r$ are the singular values of $A$. If
$A\in{_qS}(n)$ with $\rank A=r$, then $V$ and $V_1$ can be taken as
$U$ and $U_1$ in (\ref{1ex}) respectively.
\begin{lemma}\label{lemma1.5}
Let $X\in\Q^{m\times n}$ with $\rank X=n\le m$. Let $X=UDV^H$ with
$U\in{_qV_{n,m}}$, $V\in{_qO(n)}$ and
$D=\diag(\lambda_1,{{\ldots}},\lambda_n)$ (assume that
$\lambda_1>\lambda_2>\cdots>\lambda_n>0$). Then
\begin{enumerate}
\item[$(1)$] $(dX)=(2\pi^2)^{-n}\dprod_{j<i}^{n}(\lambda^2_{j}-\lambda^2_{i})^{4}\dprod^n_{i=1}\lambda_i^{4m-4n+3}
(dD)\dwedge(U^{H}dU)\dwedge(V^HdV)$ for $U\not=V;$
\item[$(2)$] $(dX)=(2\pi^2)^{-n}\dprod_{j<i}^{n}(\lambda_{j}-\lambda_{i})^{4}(dD)\dwedge(U^{H}dU)$ for $m=n$ and
$U=V$,
\end{enumerate}
where $(V^HdV)=\dwedge^n_{s<t}v^H_td\,v_s$ for $V=(v_1\cdots v_n)$,
$(U^HdU)=\dwedge^n_{s<t}u^H_td\,u_s$ for $U=(u_1\cdots u_n)$.
\end{lemma}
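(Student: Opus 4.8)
The plan is to reduce the singular value decomposition change of variables to the two triangular decompositions already established in Lemmas \ref{lemma1.3} and \ref{lemma1.6}, and then to compute the Jacobian on the group of quaternionic unitary matrices. First I would write $X=H_1T$ with $H_1\in{_qV_{n,m}}$ and $T$ upper triangular with positive diagonal, so that Lemma \ref{lemma1.6} gives $(dX)=\prod_{i=1}^n t_{ii}^{4(m-i)+3}(dT)\wedge(H_1^HdH_1)$. Next, since $T^HT=X^HX=VD^2V^H$ is the eigen-decomposition of a positive definite matrix in ${_qS}(n)$, I would apply the SVD (equation (\ref{1ex})) in the symmetric case, or equivalently the polar/spectral form, to write $T^HT=VD^2V^H$; controlling the relation between $(dT)$ and $(d(T^HT))$ via Lemma \ref{lemma1.3}, and then between $(d(T^HT))$ and $(dD)\wedge(V^HdV)$ via the spectral decomposition of a quaternion Hermitian matrix (the $\beta=4$ analogue of the classical Weyl integration formula), produces the factor $\prod_{j<i}(\lambda_j^2-\lambda_i^2)^4$ together with a power of the $\lambda_i$. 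Composing these and setting $U=H_1V$ (so $U^HdU$ absorbs $H_1^HdH_1$ and $V^HdV$ appropriately) should yield the stated formula in case (1).

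For the symmetric square case (2), I would start directly from $X\in{_qS}(n)$ with spectral decomposition $X=UDU^H$, $D=\diag(\lambda_1,\dots,\lambda_n)$. Differentiating gives $dX=dU\,DU^H+U\,dD\,U^H+UD\,dU^H$, and multiplying on the left by $U^H$ and on the right by $U$ yields $U^HdX\,U=dD+(U^HdU)D-D(U^HdU)$, using $U^HdU+dU^HU=0$. The diagonal part is $dD$; the $(s,t)$ off-diagonal block (a quaternion, i.e. four real coordinates) of $U^HdX\,U$ equals $(\lambda_t-\lambda_s)$ times the $(s,t)$ entry of $U^HdU$, contributing a factor $(\lambda_t-\lambda_s)^4$ for each pair $s<t$ since each quaternionic off-diagonal entry carries four real differentials. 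Collecting the constant from the left-invariance of $(dX)$ under multiplication by $U\in{_qO}(n)$ (which accounts for the $(2\pi^2)^{-n}$, the same normalization constant appearing in the volume of ${_qV_{n,n}}$) gives case (2). The passage from case (2) to case (1) can alternatively be organized by first integrating out, or by noting that when $m=n$ and $U=V$ the squared differences $(\lambda_j^2-\lambda_i^2)^4=(\lambda_j-\lambda_i)^4(\lambda_j+\lambda_i)^4$ and the extra $\lambda_i$-powers combine with a change from $\lambda_i$ to $\lambda_i^2$ in the rectangular derivation.

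The main obstacle will be bookkeeping the exterior-algebra factors of the form $(H_1^HdH_1)$, $(U^HdU)$, $(V^HdV)$ correctly: one must make sure that when $U=H_1V$ the wedge $(H_1^HdH_1)$ decomposes cleanly as $(U^HdU)\wedge(\text{something already counted})$ without double-counting or sign ambiguity, and that the ranges of indices ($s<t\le n$ versus $s<t\le m$) match up so that exactly the claimed power of each $\lambda_i$ survives. In particular the exponent $4m-4n+3$ in (1) must emerge as $4(m-i)+3$ (from Lemma \ref{lemma1.6}) minus the $4(n-i)+1$ type contributions (from Lemma \ref{lemma1.3}) after the substitution $t_{ii}^2\mapsto\lambda_i^2$, and keeping the factor of $2$ from Lemma \ref{lemma1.3} and the $\pi^2$ from the quaternionic sphere volumes in balance to produce the overall $(2\pi^2)^{-n}$. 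I would also need to justify that the off-diagonal quaternion entries genuinely contribute a fourth power (not some other power) of the eigenvalue gap; this is the $\beta=4$ feature of the quaternion case and follows from each such entry having four functionally independent real components, but it should be stated explicitly. Once the constant and the exponents are pinned down, the rest is routine.
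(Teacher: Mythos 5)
Your route is genuinely different from the paper's: for this lemma the paper gives no derivation at all, merely citing \cite[p.~241--242]{ER} and remarking that one must divide the volume element by $(2\pi^2)^n$ to normalize arbitrary phases. Your plan is the standard self-contained alternative, and its arithmetic does close up: writing $X=H_1T$ and combining Lemma \ref{lemma1.6} with Lemma \ref{lemma1.3} for $Z=T^HT=X^HX$ gives $(dX)=2^{-n}\prod_i t_{ii}^{4(m-n)+2}(dZ)\wedge(H_1^HdH_1)$, i.e.\ the exponent $[4(m-i)+3]-[4(n-i)+1]=4(m-n)+2$ on each $t_{ii}$ becomes $4m-4n+2$ on each $\lambda_i$ because $\prod_i t_{ii}^2=\prod_i\lambda_i^2$; part (2) applied to $Z=VD^2V^H$ then supplies $(2\pi^2)^{-n}\prod_{j<i}(\lambda_j^2-\lambda_i^2)^4$ together with $\bigwedge_i d(\lambda_i^2)=2^n\prod_i\lambda_i\,(dD)$, whose $2^n$ cancels the $2^{-n}$ and whose extra $\prod_i\lambda_i$ produces the exponent $4m-4n+3$. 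Your argument for part (2) --- conjugating $dX$ by $U^H$, reading $d\lambda_s$ on the diagonal and $(\lambda_t-\lambda_s)(U^HdU)_{st}$ off the diagonal, with four real components per quaternion entry giving the fourth power --- is the correct $\beta=4$ computation.

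Two steps need repair before this is a proof. First, $U=H_1V$ is false: from $T^HT=VD^2V^H$ one only obtains $T=QDV^H$ for some $Q\in{_qO(n)}$ depending on $T$, hence $H_1=XT^{-1}=UQ^H$ with $Q$ varying with the other coordinates. So $(H_1^HdH_1)$ cannot be traded for $(U^HdU)$ by a pointwise identification; it must be justified by the fact that both are the (unique up to normalization) invariant volume element on ${_qV_{n,m}}$. Also note that $(V^HdV)$ is already produced by the spectral-decomposition step and is not ``absorbed'' into $(U^HdU)$ --- it survives as a separate factor in (1). Second, the constant $(2\pi^2)^{-n}$ does not come from invariance of $(dX)$ under conjugation by $U$; it comes from the non-uniqueness of the decomposition, namely $UDV^H=(U\Lambda)D(V\Lambda)^H$ for any $\Lambda=\diag(q_1,\ldots,q_n)$ with $\|q_i\|=1$, so one must quotient by $(S^3)^n$, of volume $(2\pi^2)^n$ --- which is exactly the normalization the paper's one-line proof alludes to. With these two points stated correctly your derivation is complete and, unlike the paper's, does not outsource the lemma to the literature.
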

\begin{proof}The assertions can be found in \cite[p241, p242]{ER}. But {{we}} must divide the volume elements by
$(2\pi^2)^n$ to normalize the arbitrary phases of elements in the
first row of $U$.
\end{proof}
\begin{corollary}\label{coro1.1}
Let $X=UDV^{H}$ with $X,~U, ~D,~V$ given in Lemma \ref{lemma1.5}. Put $Z=X^{H}X$. Then
$$
(dX)=2^{-n}\prod_{i=1}^{n}\lambda_{i}^{4m-4n+2}(dZ)\wedge(U^{H}dU)=2^{-n}|X|_q^{2m-2n+1}(dZ)\wedge(U^{H}dU).
$$
\end{corollary}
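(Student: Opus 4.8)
The plan is to read off the Jacobian by combining the singular--value decomposition formula for $X$ in Lemma~\ref{lemma1.5}(1) with the spectral decomposition formula for a positive definite Hermitian matrix in Lemma~\ref{lemma1.5}(2). First I would observe that, since $U\in{_qV_{n,m}}$ satisfies $U^HU=I_n$ and $D$ is real diagonal, one has $Z=X^HX=VDU^HUDV^H=VD^2V^H$; thus $Z\in{_qS(n)}$ is positive definite and $Z=VD^2V^H$ is exactly a spectral decomposition of $Z$, with eigenvalues $\mu_i=\lambda_i^2$ obeying $\mu_1>\cdots>\mu_n>0$. Applying Lemma~\ref{lemma1.5}(2) to $Z$ (the case $m=n$, $U=V$), with $\Lambda=\diag(\mu_1,\ldots,\mu_n)$, gives
$$
(dZ)=(2\pi^2)^{-n}\dprod_{j<i}^n(\mu_j-\mu_i)^4\,(d\Lambda)\wedge(V^HdV).
$$

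Next I would perform the change of variables $\mu_i=\lambda_i^2$. Since $d\mu_i=2\lambda_i\,d\lambda_i$, we obtain $(d\Lambda)=2^n\dprod_{i=1}^n\lambda_i\,(dD)$, while $\mu_j-\mu_i=\lambda_j^2-\lambda_i^2$, so that
$$
(dZ)=(2\pi^2)^{-n}2^n\dprod_{j<i}^n(\lambda_j^2-\lambda_i^2)^4\dprod_{i=1}^n\lambda_i\,(dD)\wedge(V^HdV).
$$
Wedging both sides with $(U^HdU)$ and comparing with the expression for $(dX)$ from Lemma~\ref{lemma1.5}(1), namely
$$
(dX)=(2\pi^2)^{-n}\dprod_{j<i}^n(\lambda_j^2-\lambda_i^2)^4\dprod_{i=1}^n\lambda_i^{4m-4n+3}\,(dD)\wedge(U^HdU)\wedge(V^HdV),
$$
the common factors $(2\pi^2)^{-n}$, $\dprod_{j<i}(\lambda_j^2-\lambda_i^2)^4$ and the exterior form $(dD)\wedge(V^HdV)$ cancel (transposing the blocks of $1$--forms only changes a sign, which is immaterial for the volume element), and there remains
$$
(dX)=\frac{\dprod_{i=1}^n\lambda_i^{4m-4n+3}}{2^n\dprod_{i=1}^n\lambda_i}(dZ)\wedge(U^HdU)=2^{-n}\dprod_{i=1}^n\lambda_i^{4m-4n+2}(dZ)\wedge(U^HdU).
$$

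Finally I would rewrite the scalar in determinantal form: since $|X|_q=|X|_d=|X^HX|=\dprod_{i=1}^n\lambda_i^2$, we have $\dprod_{i=1}^n\lambda_i^{4m-4n+2}=\big(\dprod_{i=1}^n\lambda_i^2\big)^{2m-2n+1}=|X|_q^{2m-2n+1}$, which gives the second stated equality. The only delicate point is the exterior--algebra bookkeeping---one must be sure that $(dZ)\wedge(U^HdU)$ and $(dD)\wedge(U^HdU)\wedge(V^HdV)$ are built from the same underlying collection of $1$--forms before dividing---but since all the identities of Lemma~\ref{lemma1.5} are stated in the regime $\lambda_1>\cdots>\lambda_n>0$ and $\rank X=n$, this matching is automatic, so no genuine obstacle arises; the argument is essentially a bookkeeping computation once the reduction $Z=VD^2V^H$ is in hand.
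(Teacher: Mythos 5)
Your proof is correct, and it is the derivation the paper intends: the corollary is stated without proof as an immediate consequence of Lemma \ref{lemma1.5}, and your route---applying Lemma \ref{lemma1.5}(2) to the spectral decomposition $Z=VD^2V^H$, changing variables $\mu_i=\lambda_i^2$, and cancelling the common factors against Lemma \ref{lemma1.5}(1)---is exactly how the omitted Jacobian computation goes, with the exponent arithmetic and the identification $|X|_q=|X^HX|=\prod_i\lambda_i^2$ all checking out.
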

Recall that a quaternion variable $X=X_1+X_2i+X_3j+X_4k\sim
\mathbb{Q}N(0,1)$ if $X_1, X_2, X_3, X_4$ iid. $N(0,\frac{1}{4})$.
Thus $X=(x_{ij})_{n\times m}\in \Q^{n\times m}$ is {{said}} to be
the quaternion {{normal}} matrix $\Q N_{n\times m}(0,I_n\otimes
I_m)$ (or $X\sim\Q N_{n \times m}(0,I_n\otimes I_m)$) if
$\{x_{ij}\vert\,i=1,{\ldots},n,j=1,{\ldots},m\}$ iid. to $\Q
N(0,1)$. It is easy to deduce that the density function of $X \sim\Q
N_{n \times m}(0,I_n\otimes I_m)$ is
\begin{equation}\label{eqa}
f(X)=\dfrac{2^{2mn}}{\pi^{2mn}}\exp(-2\tr(X^HX)).
\end{equation}
 By (\ref{eqa}) and {{Lemma}} \ref{lemma1.6}, we can get
 \begin{equation}{{\vol(V_{m,n})}}=\int_{V_{m,n}}(H_1^HdH_1)=\frac{2^m\pi^{2mn-m^2+m}}{\dprod_{i=1}^m\Gamma[2n-2(i-1)]}
 =\frac{2^m\pi^{2mn}}{\mathbb{Q}\Gamma_m(2n)}\end{equation}
  where
 $\mathbb{Q}\Gamma(a)=\pi^{m^2-m}\dprod_{i=1}^m\Gamma(a-2(i-1))$ $({{\Re(a)}}>2(m-1))$ (cf. (4.1) of \cite{Di}).

 We call $Y\sim \mathbb{Q}N_{n \times m}(\mu,I_n\otimes \Sigma)$ if $Y= \mu+XB^H$, where
 $X \sim \mathbb{Q}N_{n \times m}(0,I_n\otimes I_m), \Sigma=BB^H$ is invertible.
 By {{Lemma}} \ref{Lax} and (\ref{eqa}), we can write the
density function of $Y\sim \mathbb{Q}N_{n \times m}(\mu,I_n\otimes \Sigma)$ as follows:
\begin{equation}
\frac{2^{2mn}}{\pi^{2mn}|\Sigma|^{2n}}\exp(\Rtr(-2\Sigma^{-1}(Y-M)^H(Y-M))).
\end{equation}
Let $W=Y^HY$, we say $W\sim \mathbb{Q}W_m(n,\Sigma)$ $(n\geqslant
m)$, if $Y \sim \mathbb{Q}N_{n \times m}(0,I_n \otimes \Sigma)$. $W$
is called the quaternion central {{Wishart}} matrix and the density
function of $W$ is
\begin{equation}\label{equaaa}
\frac{2^{2mn}}{\mathbb{Q}\Gamma_m(2n)|\Sigma|^{2n}}\exp(\Rtr(-2\Sigma^{-1}W))|W|^{2n-2m+1}.
\end{equation}

As applications of the theory of zonal polynomials of quaternion
matrix argument, we discuss the distributions of the maximum and the
minimum eigenvalues of $W${{,}} ~respectively{{,}} in the last
section.

\section{ZONAL POLYNOMIAL FOR QUATERNION MATRIX}

The zonal polynomials of a Hermitian matrix are defined in terms of
partitions of positive integers. Let $k$ be a positive integer; a
partition $\kappa$ of $k$ is written as $\kappa=(k_1,k_2,\cdots)$,
where $\sum_ik_i=k$, with the {convention{,}} unless otherwise
stated, that $k_1\geqslant k_2 \geqslant \cdots$, where
$k_1,k_2,\cdots$ are non-negative integers. And if
$\kappa=(k_1,k_2,\cdots)$ and $\lambda=(l_1,l_2,\cdots)$ are two
partitions of $k$,  we will write $\kappa> \lambda$ if $k_i>l_i$ for
the first index $i$ for which the parts are unequal.
\begin{definition}\label{def2.1}
Let $Y\in{_qS(m)}$ with eigenvalues $y_1,y_2,{\ldots},y_m$ and let
$\kappa=(k_1,k_2,\cdots)$ be a partition of $k$ into not more than
$m$ parts. The zonal polynomial of $Y$ corresponding to $\kappa$,
denoted by $C_{\kappa}(Y)$ (in this paper, we use the symbol
$C_{\kappa}(Y)$  to denote the zonal {{polynomials}} of Hermitian
quaternion matrices for notational simplicity) is a symmetric
homogeneous polynomial of degree $k$ in the latent roots
$y_1,{\ldots}, y_m$ such that:
\begin{enumerate}
\item[\rm{(i)}] The term of highest weight in $C_{\kappa}(Y)$ is
$y_1^{k_1},\cdots, y_m^{k_m}$, that
is,\begin{equation}C_{\kappa}(Y)=d_{\kappa}y_1^{k_1}\cdots
y_m^{k_m}+ \textit{terms of lower weight} \end{equation} where
$d_{\kappa}$ is a constant.
\item[\rm{(ii)}] $C_{\kappa}(Y)$ is an eigenfunction of the differential
operator $\Delta_Y$ given by
\begin{equation}
\Delta_Y=\sum_{i=1}^my_i^2\frac{\partial^2}{\partial
y_i^2}+\sum_{i=1}^{m}\sum_{j=1,j\neq
i}^m4\frac{y_i^2}{y_i-y_j}\frac{\partial}{\partial
y_i}\end{equation}
\item[\rm{(iii)}] As $\kappa$ varies over all partitions of $k$, the zonal
polynomials have unit coefficients in the expansion of $(\tr Y)^k$, that is
\begin{equation}
(\tr Y)^k=(y_1+y_2+\cdots+y_m)^k=\sum_{\kappa}^mC_{\kappa}(Y).
\end{equation}
\end{enumerate}
\end{definition}

By the way, if we replace $(ii)$ by $(ii)^{'}$:
\vspace{1mm}

$(ii)^{'}$ $C_{\kappa}(Y)$ is an eigenfunction of the differential
operator $\Delta_Y$ given by
\begin{equation}
\Delta_Y=\sum_{i=1}^my_i^2\frac{\partial^2}{\partial
y_i^2}+\sum_{i=1}^{m}\sum_{j=1,j\neq
i}^m2\frac{y_i^2}{y_i-y_j}\frac{\partial}{\partial
y_i}\end{equation}

Then the conditions $(i),(ii)^{'}$ and $(iii)$ {{define}} zonal
polynomials for Hermitian complex matrices. We can verify this
definition of zonal polynomials is just coincide with the definition
of zonal polynomials for Hermitian complex matrices in
\cite{james2}.

By using the same method as in the proof of \cite[Theorem
7.2.2]{robb}, we can obtain {{the}} following:
\begin{lemma}\label{th2.1}
The zonal polynomial $C_{\kappa}(Y)$ corresponding to the partition
$\kappa=(k_1,k_2,{\ldots},k_m)$ of $k$ satisfies the partial
differential equation
\begin{equation}
\Delta_YC_{\kappa}(Y)=[\rho_{\kappa}+k(4m-1)]C_\kappa(Y)
\end{equation}
where $\Delta_Y$ is given by (7) and
\begin{equation}
\rho_\kappa=\sum_{i=1}^mk_i(k_i-4i)
\end{equation}
\end{lemma}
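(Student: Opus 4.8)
The plan is to mimic the classical argument used for real zonal polynomials (as in \cite[Theorem 7.2.2]{robb}), adapting the weights to the quaternion case. First I would apply the operator $\Delta_Y$ in (7) to the monomial $y_1^{k_1}\cdots y_m^{k_m}$ and compute the result explicitly. The first sum $\sum_i y_i^2\,\partial^2/\partial y_i^2$ acting on the monomial contributes the factor $\sum_i k_i(k_i-1)$. The second (off-diagonal) sum $\sum_{i}\sum_{j\ne i} 4\,y_i^2(y_i-y_j)^{-1}\,\partial/\partial y_i$ is the delicate piece: applying $\partial/\partial y_i$ pulls out $k_i$, and one is left with terms $4k_i\,y_i^2(y_i-y_j)^{-1}\,y_1^{k_1}\cdots y_m^{k_m}/y_i$. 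These are not polynomials individually, so I would pair the $(i,j)$ and $(j,i)$ terms and use the identity $\dfrac{y_i}{y_i-y_j}+\dfrac{y_j}{y_j-y_i}=1$, which turns each pair into something proportional to the monomial itself plus lower-weight corrections; summing over all pairs gives a contribution $4\sum_{i<j}k_i$, i.e. $4\sum_i (i-1)k_i$ after bookkeeping, times the leading monomial, plus strictly-lower-weight terms.

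Next I would assemble the eigenvalue: combining $\sum_i k_i(k_i-1)$ from the diagonal part with the off-diagonal contribution and simplifying, one should get exactly $\rho_\kappa + k(4m-1)$ with $\rho_\kappa=\sum_i k_i(k_i-4i)$ as in (12). Concretely, $\sum_i k_i(k_i-1) + (\text{off-diagonal leading coefficient}) = \sum_i k_i^2 - k + (\text{that coefficient})$, and the off-diagonal coefficient must therefore equal $k(4m-1) + k - \sum_i 4i k_i + (\sum_i k_i^2 - \sum_i k_i^2) $; I would verify by direct computation that the off-diagonal pairing yields precisely $4km - k - 4\sum_i i k_i + \sum_i\text{(something)}$ matching this. (This is just the routine arithmetic that pins down the constant; the structure is forced once one knows $\Delta_Y$ is triangular on monomials ordered by the partial order $>$ on partitions.)

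The key structural fact I would invoke is that $\Delta_Y$ maps the space of symmetric homogeneous polynomials of degree $k$ into itself and is \emph{triangular} with respect to the ordering $>$ on partitions: $\Delta_Y(m_\kappa) = (\rho_\kappa + k(4m-1))m_\kappa + \sum_{\lambda < \kappa} c_{\kappa\lambda} m_\lambda$, where $m_\kappa$ denotes the monomial symmetric function. This follows from the computation above, since every correction term produced by the pairing identity has a partition index strictly below $\kappa$. Given triangularity, the eigenvectors of $\Delta_Y$ are well-defined up to normalization and are indexed by partitions, with eigenvalue $\rho_\kappa + k(4m-1)$; by Definition \ref{def2.1}(i)–(ii), $C_\kappa(Y)$ is exactly the eigenvector whose leading term is $m_\kappa$, so $\Delta_Y C_\kappa(Y) = (\rho_\kappa + k(4m-1))C_\kappa(Y)$.

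The main obstacle is the careful handling of the off-diagonal sum: individually the terms $y_i^2(y_i-y_j)^{-1}$ are rational, not polynomial, so one must symmetrize over the pair $(i,j)$ before extracting the leading coefficient, and one must check that the leftover is genuinely of lower weight in the partial order rather than merely of the same degree. The quaternion-specific input is only the numerical coefficient $4$ in front of the off-diagonal part of $\Delta_Y$ (versus $2$ for the complex case and $1$ for the real case), which propagates into the $4m-1$ and the $-4i$ in $\rho_\kappa$; so beyond tracking that constant, the proof is formally identical to Muirhead's, and I would simply cite \cite[Theorem 7.2.2]{robb} for the triangularity and existence/uniqueness, presenting only the eigenvalue computation in detail.
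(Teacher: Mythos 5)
Your proposal is correct and follows exactly the route the paper takes: the paper's entire ``proof'' is the one-line remark that the method of Muirhead's Theorem 7.2.2 carries over, and your outline (triangularity of $\Delta_Y$ on monomial symmetric functions via the pairing identity for the off-diagonal rational terms, then comparison of the leading coefficient to extract the eigenvalue, with the quaternion weight $4$ propagating into $4m-1$ and $-4i$) is precisely that adaptation. The eigenvalue you obtain checks against the paper's explicit instances, e.g.\ $\Delta_Y C_{(2)}=(8m-6)C_{(2)}$ and $\Delta_Y M_{(1,1)}=(8m-12)M_{(1,1)}$.
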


If $\kappa=(k_1,k_2,{\ldots},k_m)$, the monomial symmetric function
of $y_1,y_2,{\ldots},y_m$ corresponding to $\kappa$ is defined as
$M_{\kappa}=y_1^{k_1}\cdots y_m^{k_m}+\textit{symmetric terms}$.
For example,
$$
M_1(Y)=y_1+\cdots+y_m,\quad M_2(Y)=y_1^2+\cdots+y_m^2,\quad M_{1,1}(Y)=\sum_{i<j}^m y_iy_j
$$
and so on.

When $k=1$, $C_{(1)}=\tr Y=y_1+\cdots+y_m$ by (8).

When $k=2$, there is two partitions $(1,1)$, $(2,0)$ by
definition \ref{def2.1} and Lemma \ref{th2.1}, so we have following equations,
\begin{align}
C_{(2)}=&d_{(2)}M_{(2)}(Y)+\beta M_{(1,1)}(Y)\\
C_{(1,1)}=&(2-\beta)M_{(1,1)}(Y)\\
\Delta_YC_{(2)}(Y)=&(8m-6)C_{(2)}(Y)
\end{align}
We have $d_{(2)}=1$ from above, since $C_{(2)}+C_{(1,1)}=(\tr Y)^2$.  Also we can verify
\begin{align}
\Delta_YM_{(2)}(Y)=&(8m-6)M_{(2)}(Y)+8M_{(1,1)}(Y)\\
\Delta_YM_{(1,1)}(Y)=&(8m-12)M_{(1,1)}(Y).
\end{align}
By means of (15), (16) and (14), we have $\beta=\dfrac{4}{\,3\,}$ by
the following equation,
$$
(8m-6)(M_{(2)}(Y)+\beta M_{(1,1)}(Y))=(8m-6)M_{(2)}(Y)+8M_{(1,1)}(Y)+(8m-12)\beta M_{(1,1)}(Y).
$$
Then {{the}} two zonal polynomials for Hermitian quaternion matrices
in the case $k=2$ are
$$
C_{(2)}=M_{(2)}(Y)+\frac{4}{3}M_{(1,1)}(Y),\quad C_{(1,1)}=\frac{2}{3}M_{(1,1)}(Y).
$$

Now {{we}} consider the case $k=3$. We {{have}} three partitions
$(3),(2,1),(1,1,1)$ when $k=3$. Thus,
\begin{align*}
C_{(3)}=&M_{(3)}(Y)+\beta M_{(2,1)}(Y)+\gamma M_{(1,1,1)}(Y)\\
C_{(2,1)}=&(3-\beta)M_{(2,1)}(Y)+\delta M_{(1,1,1)}(Y)\\
C_{(1,1,1)}=&(6-\gamma-\delta)M_{(1,1,1)}(Y).
\end{align*}
Since
\begin{align*}
\Delta_YM_{(3)}(Y)=&(12m-6)M_{(3)}(Y)+12M_{(2,1)}(Y)\\
\Delta_YM_{(2,1)}(Y)=&(12m-14)M_{(2,1)}(Y)+24M_{(1,1,1)}(Y)\\
\Delta_YM_{(1,1,1)}(Y)=&12(m-2)M_{(1,1,1)}(Y),
\end{align*}
it follows from Lemma \ref{th2.1} that
$$
\Delta_YC_{(3)}(Y)=(12m-6)C_{(3)}(Y),\quad \Delta_YC_{(2,1)}(Y)=(12m-14)C_{(2,1)}(Y).
$$
From {{the}} above equations, we can deduce that
$\beta=\dfrac{3}{\,2\,},\ \gamma=2,\ \delta=\dfrac{18}{\,5\,}$.
Therefore, we have three zonal polynomials for Hermitian quaternion
matrices when $k=3$ as follows:
\begin{align*}
C_{(3)}(Y)=&M_{(3)}(Y)+\frac{3}{\,2\,} M_{(2,1)}(Y)+2M_{(1,1,1)}(Y)\\
C_{(2,1)}(Y)=&\frac{3}{\,2\,}M_{(2,1)}(Y)+\frac{18}{\,5\,}M_{(1,1,1)}(Y)\\
C_{(1,1,1)}(Y)=&\frac{2}{\,5\,}M_{(1,1,1)}(Y).
\end{align*}

In general, let $\kappa$ be a partition of $k$. Then $C_\kappa(Y)$
can be expressed in terms of monomial symmetric functions as
$$
C_\kappa(Y)=\sum \limits_{\lambda\leqslant \kappa}c_{(\kappa,\lambda)}M_{(\lambda)}(Y).
$$
By Lemma \ref{th2.1}, we obtain {{that}} the coefficients
$c_{(\kappa,\lambda)}$ {{are}} determined by {{the}} following
equation:
\begin{equation}\label{eqxxx}
c_{(\kappa,\lambda)}=\sum_{\lambda < \mu \leqslant
\kappa}\frac{4[(l_i+t)-(l_j-t)]}{\rho_\kappa-\rho_\lambda}c_{(\kappa,\mu)},
\end{equation}
where $\rho_\kappa=\sum\limits_{i=1}^mk_i(k_i-4i)$,
$\lambda=(l_1,{\ldots},l_m)$ and
$\mu=(l_1,{\ldots},l_i+t,{\ldots},l_j-t,{\ldots},l_m)$ for
$t=1,\cdots,l_j$ such that, when the parts of the partition $\mu$
are arranged in descending order, $\mu$ is above $\lambda$ and below
or equal to $\kappa$. The summation in (\ref{eqxxx}) is over all
such $\mu$, including possibly, non--descending ones, and any empty
sum is taken to be zero.

For example, when $k=4$, we have five partitions
$(4),(3,1),(2,2),(2,1,1),(1,1,1,1)$. Then the zonal polynomial $C_{(4)}(Y)$ has the form
\begin{align*}
C_{(4)}(Y)=&M_{(4)}(Y)+c_{(4)(3,1)}M_{(3,1)}(Y)+c_{(4),(2,2)}M_{(2,2)}(Y)\\
&\ +c_{(4),(2,1,1)}M_{(2,1,1)}(Y)+c_{(4),(1,1,1,1)}M_{(1,1,1,1)}(Y).
\end{align*}
By (11), we have
$$
\rho_{(4)}=0,\ \rho_{(3,1)}=-10,\ \rho_{(2,2)}=-16,\
\rho_{(2,1,1)}=-22,\ \rho_{(1,1,1,1)}=-36.
$$
Let $\kappa=(4)$, $\lambda=(3,1)$. Then by (\ref{eqxxx}),
$c_{(4)(3,1)}=\dfrac{4 \times 4}{10}\times 1=\dfrac{8}{\,5\,}$.
The coefficient $c_{(4),(2,2)}$ comes from the partitions
$(3,1),(4)$, so
$$
c_{(4)(2,2)}=\frac{4 \times 2}{16}\times \frac{8}{\,5\,}+\frac{4 \times 4}{16}\times 1=\frac{9}{\,5\,}.
$$
Since the coefficient $c_{(4),(2,1,1)}$ comes from the partitions
$(3,1,0),(3,0,1),(2,2,0)$,
$$
c_{(4),(2,1,1)}=2\times \frac{4\times 3}{22}\times \frac{8}{\,5\,}+\frac{4\times 2}{22}\times \frac{9}{\,5\,}
=\frac{12}{\,5\,}.
$$
Noting that the coefficient $c_{(4),(1,1,1,1)}$ comes from the partitions
$(2,0,1,1)$, $(2,1,0,1)$, $(2,1,1,0)$, $(1,2,1,0)$, $(1,2,0,1)$, $(1,1,2,0)$, we have
$$
c_{(4),(1,1,1,1)}=6 \times \frac{4 \times 2}{36}\times \frac{12}{5}=\frac{16}{5}.
$$

We list the coefficients of $M_{\lambda}(Y)$ in $C_\kappa(Y)$ for
quaternion matrix $Y$ in {{the}} Table. We {{see that}} these
coefficients are different from {{these in}} the real cases given in
\cite[{p238}]{robb}. \vspace*{2mm}

\begin{tabular}{l}
Table: Coefficients of monomial symmetric functions $M_\lambda(Y)$ in $C_\kappa(Y)$  \\
\hline\\

$k=2$,\\

\begin{tabular}{cc}
&$\lambda$\\$\kappa$& \begin{tabular}{c|cc}&(2)&(1,1)\\
\hline (2)&1&4/3 \\  (1,1)&0&2/3\end{tabular}
\end{tabular} \\
\ \\

$k=3$,\\

\begin{tabular}{cc}
&$\lambda$\\$\kappa$&
\begin{tabular}{c|ccc}&(3,0)&(2,1)&(1,1,1)\\
\hline (3,0)&1&3/2&2 \\
(2,1)&0&3/2&18/5\\(1,1,1)&0&0&2/5\end{tabular}
\end{tabular}\\
\ \\
$k=4$,\\

\begin{tabular}{cc}
&$\lambda$\\$\kappa$&
\begin{tabular}{c|ccccc}&(4)&(3,1)&(2,2)&(2,1,1)&(1,1,1,1)\\
\hline (4)&1&8/5&9/5&12/5&16/5 \\
(3,1)&0&12/5&16/5&104/15&64/5\\
(2,2)&0&0&1&4/3&16/5\\(2,1,1)&0&0&0&4/3&32/7\\
(1,1,1,1)&0&0&0&0&8/35\end{tabular}
\end{tabular}\\
\ \\
$k=5$,\\

\begin{tabular}{cc}
&$\lambda$\\$\kappa$&
\begin{tabular}{c|ccccccc}&(5)&(4,1)&(3,2)&(3,1,1)&(2,2,1)&(2,1,1,1)&(1,1,1,1,1)\\
\hline (5)&1&5/3&2&8/3&3&4 &16/3\\
(4,1)&0&10/3&5&220/21&90/7&160/7&800/21\\(3,2)&0&0&3&4&26/3&16&32\\
(3,1,1)&0&0&0&20/7&80/21&85/7&200/7\\(2,2,1)&0&0&0&0&5/3&4&80/7\\
(2,1,1,1)&0&0&0&0&0&1&40/9\\(1,1,1,1,1)&0&0&0&0&0&0&8/63\end{tabular}
\end{tabular} \\
\hline\\
\end{tabular}
\vspace*{2mm}

Let $X$ be an $m \times m$ positive definite quaternion matrix and
put
\begin{equation}\label{eqzzz}
(ds)^2=\Rtr(X^{-1}dXX^{-1}dX)
\end{equation}
where $dX=(dx_{ij})_{m\times m}$. This is a differential form and is invariant under the transformation
$X\rightarrow LXL^H$, here $L\in\Q^{m \times m}$ is invertible. For then $dX\rightarrow LdXL^H$, so that
\begin{align*}
\Rtr(X^{-1}dXX^{-1}dX)&\rightarrow\Rtr((LXL^H)^{-1}LdXL^H(LXL^H)^{-1}LdXL^H)\\
&=\Rtr(X^{-1}dXX^{-1}dX).
\end{align*}

Put $n=2m^2-m$, let $x$ be the $n \times 1$ vector
\begin{align*}
x=(&x_{11},\Re x_{12},\ldots,\Re x_{1m},x_{22},\ldots,\Re x_{2m},\ldots,
x_{mm},\Im x_{12},\ldots,\Im x_{m ,m-1},\\
&\Jm x_{12},\ldots,\Jm x_{m,m-1},\Km x_{12},\ldots,\Km x_{m ,m-1})'.
\end{align*}
For notational convenience, relabel {{$x$}} as $(x_1,\ldots,x_n)$.
Similar to the real case, we have
$$
(ds)^2=\Rtr(X^{-1}dXX^{-1}dX)=dx'G(x)dx
$$
where $G(x)$ is an $n \times n$ nonsingular symmetric matrix. Define the differential operator $\Delta_X^{\ast}$ as
$$
\Delta_X^{\ast}=\det G(x)^{-1/2}\sum_{j=1}^n
\frac{\partial}{\partial x_j}{{\bigg[}}\det G(x)^{1/2}
\sum_{i=1}^ng(x)^{ij}\frac{\partial}{\partial x_i}{{\bigg]}},
$$
where $G(x)^{-1}=(g(x)^{ij})$. Let $\dfrac{\partial}{\partial
x}={{\bigg(}}\dfrac{\partial}{\partial x_1}, \cdots,
\dfrac{\partial}{\partial x_n}{{\bigg)}}'$, then we can write
$\Delta_X^{\ast}$ as
\begin{equation}\label{eqaaa}
\Delta_X^{\ast}=\det G(x)^{-1/2}{{\bigg(}}\frac{\partial}{\partial
x}{{\bigg)}}^{'}{{\bigg[}}\det
G(x)^{1/2}G(x)^{-1}\frac{\partial}{\partial x}{{\bigg]}}
\end{equation}
{{which}} is invariant under the transformation $X\rightarrow LXL^H$
($L\in\Q^{m \times m}$ is invertible), i.e.,
$\Delta_X^{\ast}=\Delta_{LXL^H}^{\ast}$.

The proofs of {{the}} above assertions are just {{the}} same as in
\cite[{p}240]{robb} {{and}} we do not show them here. {{Consider the
positive definite quaternion}} matrix $X=HYH^H$, $H \in {_qO(m)}$,
$Y=\diag(y_1,\ldots,y_m)$. In terms of $H$ and $Y$, the invariant
differential form $(ds)^2$ given by (\ref{eqzzz}) can be written as

\begin{align*}
(ds)^2&=\Rtr(X^{-1}dXX^{-1}dX)\\
&=\Rtr(Y^{-1}dYY^{-1}dY)-2\Rtr(d\Theta Y^{-1}d\Theta Y^{-1})+2\Rtr(d\Theta d\Theta)\\
&=\sum\limits_{i=1}^m\frac{(dy_i)^2}{y_i^2}-2\sum\limits_{i=1}^m
((\Im d\theta_{ii})^2+(\Jm d\theta_{ii})^2+(\Km d\theta_{ii})^2)\\
&+2\sum\limits_{i<j}^m\frac{y_i^2+y_j^2}{y_iy_j}((\Re d\theta_{ij})^2+(\Im d\theta_{ij})^2+(\Jm d\theta_{ij})^2
+(\Km d\theta_{ij})^2)\\
&+2\sum\limits_{i=1}^m ((\Im d\theta_{ii})^2+(\Jm d\theta_{ii})^2+(\Km d\theta_{ii})^2)\\
&-4\sum\limits_{i<j}^m((\Re d\theta_{ij})^2+(\Im d\theta_{ij})^2+(\Jm d\theta_{ij})^2+(\Km d\theta_{ij})^2)\\
&=\sum\limits_{i=1}^m\frac{(dy_i)^2}{y_i^2}+2\sum\limits_{i<j}^m\frac{{(y_i-y_j)}^2}{y_iy_j}
((\Re d\theta_{ij})^2+(\Im d\theta_{ij})^2+(\Jm d\theta_{ij})^2+(\Km d\theta_{ij})^2)\\
&=((dy)^{'}\ (\Re d\theta)^{'}\ (\Im d\theta)^{'}\ (\Jm d\theta)^{'}\ (\Km d\theta')G(y)
\begin{pmatrix}dy\\ \Re d\theta\\ \Im d\theta\\ \Jm d\theta\\ \Km d\theta \end{pmatrix}
\end{align*}
where $d\Theta=(d\theta_{ij})=H^HdH=-dH^HH$,
$dy=(dy_1,dy_2,{\ldots},dy_m)^{'}$, and
\begin{align*}
\Re d\theta=&(\Re d\theta_{12},\Re d\theta_{13},{\ldots},\Re
d\theta_{m-1,m})^{'},\quad
\Im d\theta=(\Im d\theta_{12},\Im d\theta_{13},{\ldots},\Im d\theta_{m-1,m})^{'},\\
\Jm d\theta=&(\Jm d\theta_{12},\Jm d\theta_{13},{\ldots},\Jm
d\theta_{m-1,m})^{'},\ \Km d\theta=(\Km d\theta_{12},\Km
d\theta_{13},{\ldots},\Km d\theta_{m-1,m})^{'}.
\end{align*}
Therefore $G(y)$ has the form
$$
G(y)=\left( \begin{array}{cc}B &0\\0&\begin{array}{ccccc}A_{12}&&&&\\&\ddots&&&\\&&A_{ij}(i<j)&&
\\&&&\ddots&\\&&&&A_{m-1, m}\end{array}\end{array}\right),
$$
where
$$
B=\left(\begin{array}{ccc}y_1^{-2}&&\\& \ddots & \\ &&y_{m}^{-2}\end{array}\right),\
A_{ij}= \left( \begin{array}{cccc}\dfrac{2(y_i-y_j)^2}{y_iy_j}&&&\\&\dfrac{2(y_i-y_j)^2}{y_iy_j}&&\\&&
\dfrac{2(y_i-y_j)^2}{y_iy_j}&\\ &&&\dfrac{2(y_i-y_j)^2}{y_iy_j}\end{array}\right).
$$
In terms of (\ref{eqaaa}) and $\dfrac{\partial}{\partial y}$, $\dfrac{\partial}{\partial R\theta}$,
$\dfrac{\partial}{\partial I\theta}$, $\dfrac{\partial}{\partial J\theta}$,
$\dfrac{\partial}{\partial K\theta}$, the operator $\Delta_X^\ast$ can be expressed as
$$\Delta_X^\ast=\Delta_{HYH^H}^\ast=|G(y)|^{-1/2}\begin{pmatrix}\dfrac{\partial}{\partial y}\\
\dfrac{\partial}{\partial R\theta}\\ \dfrac{\partial}{\partial I\theta}\\ \dfrac{\partial}{\partial
J\theta}\\ \dfrac{\partial}{\partial K\theta}\end{pmatrix}^{'}
\left[|G(y)|^{1/2}G(y)^{-1}\begin{pmatrix}\dfrac{\partial}{\partial y}\\
\dfrac{\partial}{\partial R\theta}\\ \dfrac{\partial}{\partial I\theta}\\ \dfrac{\partial}{\partial J\theta}\\
\dfrac{\partial}{\partial K\theta}\end{pmatrix}\right],
$$
($\dfrac{\partial}{\partial R\theta}$, $\dfrac{\partial}{\partial I\theta}$, $\dfrac{\partial}{\partial J\theta}$,
$\dfrac{\partial}{\partial K\theta}$ are the derivation of $\Re\theta,\Im\theta,\Jm\theta,\Km\theta$ respectively),
that is,
\begin{align*}
\Delta_X^\ast =&\Delta_{HYH^H}^\ast=\sum\limits_{i=1}^my_i^2\frac{\partial^2}{\partial
y_i^2}+4\sum\limits_{i=1}^m\sum\limits_{j=1,j\neq i}^m\dfrac{y_i^2}{y_i-y_j}\dfrac{\partial}{\partial y_i}\\
&+(3-2m)\sum\limits_{i=1}^my_i\dfrac{\partial}{\partial
y_i}+\dfrac{1}{2}\sum\limits_{i<j}^m\frac{y_iy_j}
{(y_i-y_j)^2}{{\bigg(}}\dfrac{\partial^2}{\partial
R\theta_{ij}^2}+\dfrac{\partial^2}{\partial I\theta_{ij}^2}+
\dfrac{\partial^2}{\partial J\theta_{ij}^2}+\dfrac{\partial^2}{\partial K\theta_{ij}^2}{\bigg)}\\
=&\Delta_Y+(3-2m)E_Y+\frac{1}{\,2\,}\sum\limits_{i<j}^m\frac{y_iy_j}{(y_i-y_j)^2}{{\bigg(}}\frac{\partial^2}{\partial
R\theta_{ij}^2}+\dfrac{\partial^2}{\partial
I\theta_{ij}^2}+\dfrac{\partial^2}{\partial
J\theta_{ij}^2}+\dfrac{\partial^2}{\partial K\theta_{ij}^2}{\bigg)}
\end{align*}
where $\Delta_Y$ is given in Definition \ref{def2.1},
$E_Y=\sum\limits_{i=1}^{m}y_i\dfrac{\partial}{\partial y_i}$,
$\dfrac{\partial^2}{\partial R\theta^2},\dfrac{\partial^2}{\partial
I\theta^2},\dfrac{\partial^2}{\partial
J\theta^2},\dfrac{\partial^2}{\partial K\theta^2}$ is the second
derivation of $\Re\theta,~\Im\theta,~\Jm\theta,~\Km\theta${{,}}
respectively. It follows from $E_YC_\kappa(Y)=kC_\kappa(Y)$ and
{{the}} above equation that
\begin{align*}
\Delta_X^\ast& C_\kappa(X)=\Delta_{HYH^H}^\ast C_\kappa(Y)\\
&={{\bigg[}}\Delta_Y+(3-2m)E_Y+\frac{1}{\,2\,}\sum\limits_{i<j}^m\frac{y_iy_j}{(y_i-y_j)^2}{{\bigg(}}\frac{\partial^2}{\partial
R\theta_{ij}^2}+\frac{\partial^2}{\partial
I\theta_{ij}^2}+\frac{\partial^2}{\partial
J\theta_{ij}^2}+\frac{\partial^2}{\partial
K\theta_{ij}^2}{{\bigg)}}{{\bigg]}}C_\kappa(Y)\\
&=[\rho_\kappa +k(4m-1)+(3-2m)k]C_\kappa(Y)\\
&= [\rho_\kappa+2k(m+1)]C_\kappa(X).
\end{align*}

In fact, we could have defined the zonal polynomial $C_{\kappa}(X)$
for $X>0$ in terms of the operator $\Delta_X^{*}$ rather than
$\Delta_Y$. Here the definition would be that
$C_\kappa(X)~(=C_\kappa(Y))$ is a symmetric homogeneous polynomial
of degree $k$ in the latent roots $y_1,\cdots,y_m$ of $X$ satisfying
conditions (i) and (iii) of definition \ref{def2.1} and such that
$C_\kappa(X)$ is an eigenfunction of the differential operator
$\Delta_X^\ast$. The eigenvalue of $\Delta_X^\ast$ corresponding to
$C_\kappa(X)$ is, from {{the}} above equation, equal to
$[\rho_\kappa+2k(m+1)]$. This defines the zonal polynomials for the
{{positive definite quaternion}} matrix $X$, and since they are
polynomials in the latent roots of $X$ their definition can be
extended to an arbitrary Hermitian quaternion matrix and then to
{{a}} non-Hermitian quaternion matrix by using
$C_\kappa(XY)=C_\kappa(X^{1/2}YX^{1/2})$ ($X$ is a positive definite
matrix and $Y$ is a Hermitian matrix).

\begin{theorem}\label{th 2.2}
Let $X_1, X_2\in{_qS(m)}$ with $X_1$ positive definite. Then
$$
\int_{{_qO(m)}}C_\kappa(X_1HX_2H^H)(dH)=\frac{C_\kappa(X_1)C_\kappa(X_2)}{C_\kappa(I_m)},
$$
where $(dH)$ is the normalized invariant measure on $_qO(m)$.
\end{theorem}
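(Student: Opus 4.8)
The plan is to follow the classical Muirhead-style argument for the real case (Theorem 7.2.5 in \cite{robb}), adapted to the quaternion differential operator $\Delta_X^{\ast}$ established above. Fix $X_2\in{_qS(m)}$ and regard the integral
$$
g(X_1)=\int_{{_qO(m)}}C_\kappa(X_1HX_2H^H)\,(dH)
$$
as a function of the positive definite matrix $X_1$. First I would observe that $g$ is a symmetric homogeneous polynomial of degree $k$ in the eigenvalues of $X_1$: homogeneity and polynomiality are inherited from $C_\kappa$, and symmetry (i.e. invariance under $X_1\mapsto QX_1Q^H$ for $Q\in{_qO(m)}$) follows from the right-invariance of the measure $(dH)$ after the substitution $H\mapsto Q^HH$. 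Hence $g$ lies in the finite-dimensional space spanned by $\{C_\lambda(X_1):\lambda\vdash k\}$, and we may write $g(X_1)=\sum_{\lambda\vdash k}a_\lambda C_\lambda(X_1)$ for constants $a_\lambda$ depending on $X_2$ and $\kappa$.

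The key step is to show that only $\lambda=\kappa$ survives. For this I would apply the operator $\Delta_{X_1}^{\ast}$ to $g$. Using the invariance $\Delta_{X_1}^{\ast}=\Delta_{LX_1L^H}^{\ast}$ noted just before the theorem — more precisely, that $\Delta_{X_1}^{\ast}$ commutes with the substitution $X_1\mapsto HX_1H^H$ inside the integrand because $H\in{_qO(m)}$ — one gets
$$
\Delta_{X_1}^{\ast}g(X_1)=\int_{{_qO(m)}}\big(\Delta_{X_1}^{\ast}C_\kappa\big)(X_1HX_2H^H)\,(dH)
=[\rho_\kappa+2k(m+1)]\,g(X_1),
$$
using the eigenvalue computation $\Delta_X^{\ast}C_\kappa(X)=[\rho_\kappa+2k(m+1)]C_\kappa(X)$ derived above. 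On the other hand $\Delta_{X_1}^{\ast}g=\sum_\lambda a_\lambda[\rho_\lambda+2k(m+1)]C_\lambda(X_1)$. Comparing coefficients and using that the eigenvalues $\rho_\lambda+2k(m+1)$ are distinct for distinct partitions $\lambda$ of $k$ (equivalently, the $\rho_\lambda$ are distinct, which follows from $\rho_\lambda=\sum_i l_i(l_i-4i)$ being strictly monotone with respect to the partial order on partitions), we conclude $a_\lambda=0$ unless $\lambda=\kappa$. Therefore $g(X_1)=a_\kappa C_\kappa(X_1)$ where $a_\kappa=a_\kappa(X_2)$.

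By symmetry of the integral in $X_1$ and $X_2$ (substitute $H\mapsto H^H$ and use $\Rtr$-cyclicity so that the roles swap), the same argument gives $g(X_1)=b_\kappa(X_1)C_\kappa(X_2)$; hence $a_\kappa(X_2)=c\,C_\kappa(X_2)$ for an absolute constant $c$, and $g(X_1)=c\,C_\kappa(X_1)C_\kappa(X_2)$. To pin down $c$, set $X_1=X_2=I_m$: the left side is $\int_{{_qO(m)}}C_\kappa(I_m)\,(dH)=C_\kappa(I_m)$ since $(dH)$ is normalized, so $c=1/C_\kappa(I_m)$, giving the claimed formula. I expect the main obstacle to be the justification that $\Delta_{X_1}^{\ast}$ may be passed through the integral and through the argument substitution $X_1\mapsto X_1HX_2H^H$ cleanly — this requires care because $\Delta_{X_1}^{\ast}$ is a second-order operator in the entries of $X_1$ and one must verify that acting on $C_\kappa(X_1HX_2H^H)$ in the $X_1$-variables reproduces $(\Delta^{\ast}C_\kappa)$ evaluated at $X_1HX_2H^H$; this is exactly where the $LXL^H$-invariance of $\Delta^{\ast}$ (with $L=(HX_2H^H)^{1/2}$, say, after reducing via $C_\kappa(XY)=C_\kappa(X^{1/2}YX^{1/2})$) does the work, and it is the step most easily gotten wrong.
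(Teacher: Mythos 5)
Your argument follows essentially the same route as the paper's: show the integral is a symmetric homogeneous polynomial of degree $k$ and an eigenfunction of $\Delta^{\ast}$ with eigenvalue $\rho_\kappa+2k(m+1)$ (using the $LXL^H$-invariance of the operator), conclude it is a multiple of $C_\kappa$, and evaluate the constant at the identity. That you apply the operator in the $X_1$-variable while the paper uses $X_2$, and that you take a detour through a symmetry argument where the paper simply sets $X_2=I_m$ to read off $\lambda_\kappa=C_\kappa(X_1)/C_\kappa(I_m)$, are cosmetic differences.

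Two substantive points, however. First, the justification you offer for the key uniqueness step is false: the $\rho_\lambda$ are \emph{not} distinct over partitions of a fixed $k$. With $\rho_\lambda=\sum_i l_i(l_i-4i)$ one has, for $k=6$ and $m\geqslant 3$, $\rho_{(4,1,1)}=0-7-11=-18=\,-3-15=\rho_{(3,3)}$ (the analogous collision already occurs in the real case), so comparing coefficients in the eigenbasis does not by itself annihilate all $a_\lambda$ with $\lambda\neq\kappa$. The paper simply asserts that a symmetric homogeneous eigenfunction with this eigenvalue must be proportional to $C_\kappa$ — the standard, and standardly under-justified, step in Muirhead's treatment; a complete argument needs additional information (e.g.\ that no monomial of weight above $\kappa$ can appear in the integral) rather than distinctness of the eigenvalues, so you should not present distinctness as the reason. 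Second, your reduction via $L=(HX_2H^H)^{1/2}$ requires $X_2>0$, whereas the theorem only assumes $X_2\in{_qS(m)}$; like the paper, you need to prove the identity first for $X_2>0$ and then extend to all Hermitian $X_2$ by analytic continuation (both sides being polynomials in the eigenvalues of $X_2$), a step your write-up omits.
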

\begin{proof} Let
$$
f_\kappa(X_2)=\int_{{_qO(m)}}C_\kappa(X_1HX_2H^H)(dH).
$$
It is easy to verify $f_\kappa(X_2)=f_\kappa(UX_2U^H)$, $U \in {_qO(m)}$ so that $f_\kappa(X_2)$ is a symmetric
function of $X_2$; in fact, a symmetric homogeneous polynomial of degree $k$. Set $L=X_1^{1/2}H$ and suppose $X_2>0$.
Then by use of the invariance of $\Delta^{\ast}_{X_2}$, we have
\begin{align*}
\Delta_{X_2}^\ast f_\kappa(X_2)&=\int_{{_qO(m)}}\Delta_{X_2}^\ast C_\kappa(X_1HX_2H^H)(dH)\\
&=\int_{{_qO(m)}}\Delta_{X_2}^\ast C_\kappa(X_1^{1/2}HX_2H^HX_1^{1/2})(dH)\\
&=\int_{{_qO(m)}}\Delta_{X_2}^\ast C_\kappa(LX_2L^H)(dH) =\int_{{_qO(m)}}\Delta_{LX_2L^H}^\ast C_\kappa(LX_2L^H)(dH)\\
&=[\rho_\kappa+2k(m+1)]f_\kappa(X_2)
\end{align*}
Then $f_\kappa(X_2)$ must be a multiple of the zonal polynomial
$C_\kappa(X_2)$, i.e., $f_\kappa(X_2)=\lambda_\kappa C_\kappa(X_2)$.
Put $X_2=I_m$, then
$\lambda_\kappa=\dfrac{C_\kappa(X_1)}{C_\kappa(I_m)}$. Finally, {{we
get the result by analytic continuation}}.
\end{proof}

Theorem \ref{th 2.2} plays a vital role in the next evaluation of
many integrals involving zonal polynomials.

Let $\displaystyle\Q\Gamma_m(a)=\int_{A>0}\mathrm{etr}\,(-A)|A|^{a-2m+1}(dA)$ be the quaternion $\Gamma$--function
given in \cite{kenn} and then $\mathbb{Q}\Gamma_n(\alpha)=\pi^{n(n-1)}\dprod_{j=1}^{n}\Gamma[\alpha-2(j-1)],~~
\Re\alpha >2(n-1)$. Set
$$
\mathbb{Q}\Gamma_n(\alpha,\kappa)=\pi^{n(n-1)}\dprod_{j=1}^{n}\Gamma[\alpha+k_j-2(j-1)],~~\Re\alpha
>2(n-1)-k_n,
$$
where $\kappa=(k_1,{\ldots},k_n)$ is a partition of the integer $k$:
$k=k_1+k_2+\cdots+k_n$, $k_1\geqslant k_2 \geqslant \cdots \geqslant
k_n\geqslant0$. Then we have
$(\alpha)_\kappa\triangleq\dprod_{j=1}^{n}(\alpha-2(j-1))_{k_j}=\dfrac{\Q\Gamma_n(\alpha,\kappa)}{\Q\Gamma_n(\alpha)}$,
{{where}} $(\alpha)_j=\alpha(\alpha+1)\cdots (\alpha+j-1)$.

\begin{lemma}\label{lemma2.1}
Let $A=(a_{ij})_{m\times m}\in{_qS(m)}$ with eigenvalues
$\lambda_1,{\ldots}, \lambda_m$ (are all real). Put
$r_1=\sum\limits^m_{i=1}\lambda_i$, $r_2=\sum\limits_{i<j}^m
\lambda_i\lambda_j,~\cdots,~r_m=\lambda_1\cdots\lambda_m$ and
$\tr_k(A)=\sum\limits_{1\leq i_1<i_2<\cdots<i_k\leq m}\det
A_{i_1,i_2\cdots i_k}$, where $A_{i_1,i_2,\cdots,i_k}$ {{denotes}}
the $k\times k$ matrix formed from $A$ by deleting all but the
$i_1,{\ldots},i_k$th rows and columns. Then $r_j=\tr_j(A)$.
\end{lemma}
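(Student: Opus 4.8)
The plan is to read off both $r_k$ and $\tr_k(A)$ as coefficients of one and the same characteristic polynomial of the Hermitian matrix $A$. First I would note that every principal submatrix $A_{i_1,\ldots,i_k}$ of $A$ is again Hermitian, so $\det A_{i_1,\ldots,i_k}$ is the determinant of a Hermitian quaternion matrix and hence, by \cite{zh}, equals the product of its (real) eigenvalues; in particular $\tr_k(A)$ is a real polynomial in the real components of the entries of $A$. Introduce $p_A(t)=|tI_m-A|$, the determinant of the Hermitian matrix $tI_m-A$ (which, for a Hermitian quaternion matrix, is again the product of its eigenvalues, cf.\ \cite{zh}), and treat $t$ as a formal indeterminate. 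Since the eigenvalues of $tI_m-A$ are $t-\lambda_1,\ldots,t-\lambda_m$, this gives immediately
$$
p_A(t)=\prod_{i=1}^m(t-\lambda_i)=\sum_{k=0}^m(-1)^k r_k\,t^{m-k}\qquad(r_0=1).
$$

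The heart of the argument is to expand the very same polynomial in terms of principal minors, that is, to show $p_A(t)=\sum_{k=0}^m(-1)^k\,\tr_k(A)\,t^{m-k}$. For this I would use the Moore-type combinatorial expression for the determinant of a Hermitian quaternion matrix $B=(b_{st})$, namely $|B|=\sum_{\sigma\in S_m}\varepsilon(\sigma)\prod_c b_c$, where the product runs over the cycles $c$ of the permutation $\sigma$ taken in order of decreasing least element and $b_c$ is the product of the entries of $B$ along $c$ read off from its least element. Substituting $B=tI_m-A$: a permutation $\sigma$ with set of fixed points $F$ contributes $\bigl(\prod_{i\in F}(t-a_{ii})\bigr)$ times the cycle product formed from the off-diagonal entries $-a_{ij}$ over the non-fixed part of $\sigma$. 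Since each $t-a_{ii}$ is real and hence central, I can factor it out and expand $\prod_{i\in F}(t-a_{ii})=\sum_{G\subseteq F}t^{|F|-|G|}\prod_{i\in G}(-a_{ii})$; collecting the coefficient of $t^{m-k}$ then amounts to choosing a $k$-element index set $T$, namely the union of $G$ with the non-fixed support of $\sigma$, together with a permutation of $T$ that is the identity on $G$, and running over all such data reproduces exactly $(-1)^k\sum_{|T|=k}\det A_T=(-1)^k\tr_k(A)$, where $A_T$ denotes the principal submatrix of $A$ on $T$. Comparing the two expansions of $p_A(t)$ and equating the coefficients of $t^{m-k}$ then yields $r_j=\tr_j(A)$ for $j=1,\ldots,m$, which is the assertion.

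The step I expect to be the main obstacle is this principal-minor expansion. One must check that the ordered-cycle-product definition of the quaternion determinant restricts consistently to index subsets --- which is true because the least element of a cycle is an absolute notion, independent of the ambient set --- and that the classical sign-and-cycle bookkeeping behind ``coefficient of the characteristic polynomial $=$ signed sum of principal minors'' survives in the non-commutative setting, which it does precisely because the only factors one must commute past one another are the real diagonal entries $a_{ii}$. A shorter route would be to quote the corresponding expansion of the Moore determinant directly from the literature (e.g.\ \cite{zh}), after which the remaining steps are immediate.
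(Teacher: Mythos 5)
Your proposal is correct and follows essentially the same route as the paper: both expand the characteristic polynomial of $A$ once via its real eigenvalues (giving the elementary symmetric functions $r_k$) and once via the definition of the quaternion determinant as a signed sum of principal minors (giving $\tr_k(A)$), then equate coefficients. The only difference is that you work out in detail the principal-minor expansion of the Moore-type determinant, which the paper simply delegates to the cited reference \cite{zh}.
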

\begin{proof}
We have $P(\lambda)=|A-\lambda
I_m|=\sum\limits_{k=0}^{m}(-\lambda)^kr_{m-k}(\lambda_1,{\ldots},\lambda_m)$.
{{We}} also can get $ |A-\lambda
I_m|=\sum\limits_{k=0}^{m}(-\lambda)^k\tr_{m-k}(A)$ by the
definition of the determinant of a quaternion matrix given in
\cite{zh}. The assertion follows.
\end{proof}

\begin{lemma}\label{lemma2.2}
 Let $Y=\diag(y_1,y_2,{\ldots},y_m)$ be a real diagonal matrix
and $X=(x_{ij})_{m\times m}$ be a $m \times m$ positive definite
quaternion matrix. Then
\begin{equation}
C_{\kappa}(XY)=d_{\kappa}y_1^{k_1}\cdots y_m^{k_m}x_{11}^{k_1-k_2}
\Big|{\begin{pmatrix}x_{11}&x_{12}\\ x_{21}&x_{22} \end{pmatrix}}\Big|^{k_2-k_3}\cdots
|X|^{k_m}\\+\textit{terms of lower weight},
\end{equation}
where $\kappa=(k_1,\cdots,k_m),~d_{\kappa}$ is the coefficient of the term of highest weight in $C_{\kappa}(\cdot)$.

If $Z=\diag(z_1,z_2,{\ldots},z_m)$ is a real diagonal matrix and
$Y=(y_{ij})_{m\times m}$ is a $m \times m$ positive definite
quaternion matrix, then
\begin{align*}
C_{\kappa}(Y^{-1}Z)=&d_{\kappa}z_1^{k_m}\cdots
z_m^{k_1}y_{11}^{k_{m-1}-k_m}\Big|{\begin{pmatrix}y_{11}&y_{12}\\ y_{21}&y_{22} \end{pmatrix}}\Big|^{k_{m-2}-k_{m-1}}
\cdots |Y|^{-k_1}\\
&+\textit{terms of lower weight},
\end{align*}
where $\kappa=(k_1,{\ldots},k_m),~d_\kappa$ is the coefficient of
the term of highest weight in $C_{\kappa}(\cdot)$.
\end{lemma}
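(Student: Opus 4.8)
The plan is to expand $C_\kappa(XY)$ in the monomial‑symmetric‑function basis and then read off its leading term by a purely combinatorial monomial‑order argument, the only genuinely quaternionic input being Lemma~\ref{lemma2.1} together with the determinant calculus of \cite{zh}. Using the expansion $C_\kappa(Y)=\sum_{\lambda\leqslant\kappa}c_{(\kappa,\lambda)}M_\lambda(Y)$ recorded in \S3 and the definitional extension $C_\kappa(XY):=C_\kappa(X^{1/2}YX^{1/2})$, I would first write $C_\kappa(XY)=\sum_{\lambda\leqslant\kappa}c_{(\kappa,\lambda)}M_\lambda(w_1,\dots,w_m)$, where $w_1,\dots,w_m$ are the (real) eigenvalues of $XY$ and $c_{(\kappa,\kappa)}=d_\kappa$. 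Each $M_\lambda$ is a fixed $\mathbb{Z}$‑polynomial in the elementary symmetric functions of its arguments, so $M_\lambda(w_1,\dots,w_m)$ is that polynomial evaluated at $r_j:=e_j(w_1,\dots,w_m)$, $j=1,\dots,m$; everything then reduces to computing the $r_j$ as functions of the entries of $X$ and of $y_1,\dots,y_m$.

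The second step is to establish $r_j=\sum_{1\leqslant i_1<\dots<i_j\leqslant m}|X_{i_1,\dots,i_j}|\,y_{i_1}\cdots y_{i_j}$, where $|X_{i_1,\dots,i_j}|$ is the principal minor of $X$ on the rows and columns $i_1,\dots,i_j$, in the notation of Lemma~\ref{lemma2.1}. Indeed $XY$ is similar to the Hermitian matrix $X^{1/2}YX^{1/2}$, so by multiplicativity of the quaternion determinant $\det(\lambda I_m-XY)=\det(\lambda I_m-X^{1/2}YX^{1/2})=\prod_i(\lambda-w_i)$; expanding the left side by the principal‑minor expansion of the characteristic polynomial (the identity of \cite{zh} used to prove Lemma~\ref{lemma2.1}, here for $XY$) and using $\det\bigl((XY)_S\bigr)=\det(X_S)\det\bigl(\diag(y_i:i\in S)\bigr)=|X_S|\prod_{i\in S}y_i$, comparison of the coefficients of $\lambda^{m-j}$ yields the asserted formula for $r_j$.

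The heart of the proof is then a monomial‑order computation. Order the monomials in $y_1,\dots,y_m$ lexicographically; this is exactly the weight order of Definition~\ref{def2.1}, and $y_1^{k_1}\cdots y_m^{k_m}$ is the largest monomial whose exponent multiset is $\kappa$. By the formula just proved, the lex‑leading monomial of $r_j$ is $|X_{1,\dots,j}|\,y_1\cdots y_j$. Writing $\lambda'$ for the conjugate partition and using the classical expansion $m_\lambda=e_{\lambda'}+\sum_{\nu\succ\lambda'}b_\nu e_\nu$ (the monomial‑to‑elementary transition is unitriangular for dominance after conjugation, with integer coefficients, hence valid for our central scalars $r_j$), the term obtained by substituting $r_j$ for $e_j$ in $e_{\lambda'}=\prod_\ell e_{(\lambda')_\ell}$, namely $\prod_\ell r_{(\lambda')_\ell}$, has lex‑leading monomial $\bigl(\prod_\ell|X_{1,\dots,(\lambda')_\ell}|\bigr)y^\lambda=\bigl(\prod_{j\geqslant1}|X_{1,\dots,j}|^{\lambda_j-\lambda_{j+1}}\bigr)y^\lambda$, while every other monomial occurring in $M_\lambda(w_1,\dots,w_m)$ is lexicographically strictly below $y^\lambda$ (lex is a monomial order, so a product of the $r_j$ has leading monomial the product of their leading monomials and all else strictly lower, and the extra terms $e_\nu$ contribute leading monomials $y^{\nu'}$ with $\nu'\prec\lambda$). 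Thus $M_\lambda(w_1,\dots,w_m)=\bigl(\prod_j|X_{1,\dots,j}|^{\lambda_j-\lambda_{j+1}}\bigr)y^\lambda+(\text{lower‑weight terms})$; summing over $\lambda\leqslant\kappa$, among which $\kappa$ is lexicographically maximal, leaves $C_\kappa(XY)=d_\kappa\bigl(\prod_j|X_{1,\dots,j}|^{k_j-k_{j+1}}\bigr)y_1^{k_1}\cdots y_m^{k_m}+(\text{lower weight})$, which is the first assertion because $\prod_j|X_{1,\dots,j}|^{k_j-k_{j+1}}=x_{11}^{k_1-k_2}\,|X_{1,2}|^{k_2-k_3}\cdots|X|^{k_m}$ (convention $k_{m+1}:=0$, and $|X_{1,2}|$ the leading $2\times2$ principal minor).

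The second assertion I would deduce from the first applied to the positive definite matrix $PY^{-1}P$ and the diagonal matrix $PZP=\diag(z_m,\dots,z_1)$, where $P$ is the anti‑diagonal permutation matrix (so $P^HP=I_m$): conjugation by $P$ changes neither the eigenvalues nor $C_\kappa$, whence $C_\kappa(Y^{-1}Z)=C_\kappa\bigl((PY^{-1}P)(PZP)\bigr)$, and after the relabelling $\tilde z_i=z_{m+1-i}$ and an application of Jacobi's complementary‑minor identity $|(W^{-1})_S|=|W_{S^c}|/|W|$ (which passes to quaternion matrices through the complex representation), the leading minors $|(PY^{-1}P)_{1,\dots,j}|=|Y_{1,\dots,m-j}|/|Y|$ assemble into $y_{11}^{k_{m-1}-k_m}|Y_{1,2}|^{k_{m-2}-k_{m-1}}\cdots|Y|^{-k_1}$ while $\tilde z^{\kappa}=z_1^{k_m}\cdots z_m^{k_1}$, giving precisely the stated expression. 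The main obstacle is the bookkeeping of the third paragraph: one must check carefully that summing over $\lambda\leqslant\kappa$ creates no monomial of weight exceeding $\kappa$ and that the surviving coefficient is \emph{exactly} $\prod_j|X_{1,\dots,j}|^{k_j-k_{j+1}}$, with nothing contributed by the non‑leading parts of the $r_j$ — this is where the unitriangularity of the $m$‑to‑$e$ transition and the monomial‑order property of lex are both indispensable — and, secondarily, that the quaternion determinant facts invoked (multiplicativity, the characteristic‑polynomial expansion for the non‑Hermitian $XY$, and Jacobi's identity) are exactly those supplied by \cite{zh}.
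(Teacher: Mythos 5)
Your proposal is correct and follows essentially the same route as the paper: both reduce the leading term of $C_\kappa$ to a product $r_1^{k_1-k_2}r_2^{k_2-k_3}\cdots r_m^{k_m}$ of elementary symmetric functions of the eigenvalues, identify the $r_j$ with sums of principal minors via Lemma~\ref{lemma2.1}, and then extract the lexicographically leading minor $|X_{1,\dots,j}|\,y_1\cdots y_j$ after setting $A=XY$. Your treatment is merely more explicit where the paper is terse — in the unitriangularity of the monomial-to-elementary transition, in applying Lemma~\ref{lemma2.1} to the non-Hermitian product $XY$ via similarity with $X^{1/2}YX^{1/2}$, and in deducing the second assertion (which the paper dismisses with ``similarly'') from the first by anti-diagonal conjugation and Jacobi's complementary-minor identity.
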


\begin{proof}
Let $A\in{_qS(m)}$ and $a_1,{\ldots}, a_m$ be its real eigenvalues.
Then
\begin{align*}
C_\kappa(A)=& d_\kappa a_1^{k_1}\cdots a_m^{k_m}+ \textit{terms of lower weight} \\
=& d_{\kappa}a_1^{k_1-k_2}(a_1a_2)^{k_2-k_3}\cdots (a_1a_2\cdots a_m)^{k_m}+\textit{terms of lower weight} \\
=&d_{\kappa} (\sum_{i=1}^m a_i)^{k_1-k_2}(\sum_{i<j}^{m}a_ia_j)^{k_2-k_3}\cdots (a_1a_2\cdots
a_m)^{k_m}+{\textit{symmetric terms}}\\
=& d_{\kappa}r_1^{k_1-k_2}r_2^{k_2-k_3}\cdots
r_m^{k_m}+{\textit{symmetric terms}}.
 \end{align*}
On the other hand, by Lemma \ref{lemma2.1},
\begin{align*}
C_{\kappa}(A)=&d_{\kappa}\tr_1(A)^{k_1-k_2}\tr_2(A)^{k_2-k_3}\cdots \tr_m(A)+{\textit{symmetric terms}} \\
=&d_{\kappa}a_{11}^{k_1-k_2}{\Big|\begin{pmatrix}a_{11}&a_{12}\\ a_{21}&a_{22} \end{pmatrix}}\Big|^{k_2-k_3}\cdots
\end{align*}
Set $A=XY,~a_{ij}=x_{ij}y_j$. We have
$$
C_{\kappa}(XY)=d_{\kappa}y_1^{k_1}\cdots y_m^{k_m}x_{11}^{k_1-k_2}\Big|
{\begin{pmatrix}x_{11}&x_{12}\\x_{21}&x_{22} \end{pmatrix}}\Big|^{k_2-k_3}\cdots
|X|^{k_m}\\+\textit{terms of lower weight}.
$$

Similarly, we can get the second assertion.
\end{proof}

Let $A=A_1+A_2i+A_3j+A_4k\in\Q^{m\times n}$ and put $\re(A)=A_1+A_2i+A_3j$. Let $\Phi_m=\{T\in{_qS(m)}\vert\,
{\re(T)>0\}}$. $\Phi_m$ is called the generalized right half plane.
\begin{theorem}\label{th2.3}
Let $Z\in\Phi_m$ and $Y\in{_qS(m)}$. Then
$$
\int_{X>0}\etr(-XZ)|X|^{a-2m+1}C_{\kappa}(XY)(dX)=(a)_{\kappa}\mathbb{Q}\Gamma_m(a)|Z|^{-a}C_{\kappa}(YZ^{-1}),
$$
for $\Re(a)>2(m-1)$ and
$$
\int_{X>0}\etr(-XZ)|X|^{a-2m+1}C_{\kappa}(X^{-1}Y)(dX)=\frac{(-1)^{k}\Q\Gamma_m(a)}{(-a+2m-1)_{\kappa}}|Z|^{-a}
C_{\kappa}(YZ)
$$
for $\Re(a)>2(m-1)+k_1$, where we set $C_{\kappa}=1$ {{and}}
$(a)_\kappa=1$ when $\kappa=(0)$.
\end{theorem}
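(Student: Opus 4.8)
The plan is to reduce to the case $Z=I_m$ and then argue with the invariant operator $\Delta_Y^\ast$ exactly as in Theorem~\ref{th 2.2}. First assume $Z>0$ and substitute $X=Z^{-1/2}UZ^{-1/2}$ with $U\in{_qS(m)}$, $U>0$. By $\Rtr(AB)=\Rtr(BA)$ the factor $\etr(-XZ)$ becomes $\etr(-U)$; by multiplicativity of the quaternion determinant $|X|^{a-2m+1}=|Z|^{-(a-2m+1)}|U|^{a-2m+1}$; by Lemma~\ref{Lbx}(2) with $A=Z^{-1/2}$, $(dX)=|Z^{-1/2}|_q^{\,2m-1}(dU)=|Z|^{-(2m-1)}(dU)$ (since $|Z^{-1/2}|_q=|Z^{-1/2}|_d=|(Z^{-1/2})^HZ^{-1/2}|=|Z|^{-1}$); and by $C_\kappa(AB)=C_\kappa(BA)$ (valid for $A>0$ and $B$ Hermitian, as $AB$ and $BA$ have the same eigenvalues), $C_\kappa(XY)=C_\kappa\bigl(U\,Z^{-1/2}YZ^{-1/2}\bigr)$. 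The powers of $|Z|$ combine to exactly $|Z|^{-a}$, and since $C_\kappa(Z^{-1/2}YZ^{-1/2})=C_\kappa(YZ^{-1})$, both identities reduce to the case $Z=I_m$ with $Y$ replaced by $Z^{-1/2}YZ^{-1/2}$; the general $Z\in\Phi_m$ then follows by analytic continuation in the entries of $Z$, exactly as at the end of the proof of Theorem~\ref{th 2.2}. (For the second identity the hypothesis $\Re a>2(m-1)+k_1$ is what makes the integral converge near $X\to 0$, where $C_\kappa(X^{-1}Y)$ grows like $|X|^{-k_1}$.)

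Now fix $Z=I_m$ and set $f_\kappa(Y)=\int_{X>0}\etr(-X)|X|^{a-2m+1}C_\kappa(XY)(dX)$ for $Y>0$. Substituting $X\mapsto HWH^H$ for $H\in{_qO(m)}$ and using the invariance of $(dW)$ (Lemma~\ref{Lbx}(2)), of $\etr$ and of $|\cdot|$ under unitary conjugation, together with $C_\kappa\bigl(HWH^H\cdot HYH^H\bigr)=C_\kappa(WY)$, gives $f_\kappa(HYH^H)=f_\kappa(Y)$; hence $f_\kappa$ is a symmetric homogeneous polynomial of degree $k$ in the eigenvalues of $Y$, so $f_\kappa=\sum_{\lambda\vdash k}b_\lambda C_\lambda$. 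Differentiating under the integral sign and applying the operator identity $\Delta_Y^\ast=\Delta_{LYL^H}^\ast$ with $L=X^{1/2}$ to the function $Y\mapsto C_\kappa(X^{1/2}YX^{1/2})$ yields $\Delta_Y^\ast C_\kappa(XY)=[\rho_\kappa+2k(m+1)]C_\kappa(XY)$ (the computation preceding Theorem~\ref{th 2.2}), hence $\Delta_Y^\ast f_\kappa=[\rho_\kappa+2k(m+1)]f_\kappa$. Comparing with $\Delta_Y^\ast C_\lambda=[\rho_\lambda+2k(m+1)]C_\lambda$ forces $b_\lambda(\rho_\lambda-\rho_\kappa)=0$; since $\rho_\lambda=\sum_i l_i(l_i-4i)$ is strictly increasing along the dominance order on partitions of $k$ (moving a box from row $j$ to row $i<j$ changes $\rho$ by $2(l_i-l_j)+2+4(j-i)>0$), the $\rho_\lambda$ are pairwise distinct, so $f_\kappa(Y)=\lambda_\kappa C_\kappa(Y)$ for a constant $\lambda_\kappa$, first for $Y>0$ and then for all $Y\in{_qS(m)}$ by polynomiality. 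The same argument for $h_\kappa(Y):=\int_{X>0}\etr(-X)|X|^{a-2m+1}C_\kappa(X^{-1}Y)(dX)$ (now with $L=X^{-1/2}$) gives $h_\kappa(Y)=\mu_\kappa C_\kappa(Y)$.

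It remains to evaluate $\lambda_\kappa$ and $\mu_\kappa$. Take $Y=\diag(y_1,\dots,y_m)$. By Lemma~\ref{lemma2.2}, the monomial $y_1^{k_1}\cdots y_m^{k_m}$ occurs in $C_\kappa(XY)$ only in its highest-weight term, so comparing its coefficient in $f_\kappa(Y)=\lambda_\kappa C_\kappa(Y)$ gives $\lambda_\kappa=I_\kappa:=\int_{X>0}\etr(-X)|X|^{a-2m+1}\prod_{j=1}^m|X_{(j)}|^{\,k_j-k_{j+1}}(dX)$, where $X_{(j)}$ is the leading $j\times j$ block and $k_{m+1}:=0$; likewise the coefficient of $y_1^{k_m}y_2^{k_{m-1}}\cdots y_m^{k_1}$, using the second part of Lemma~\ref{lemma2.2}, gives $\mu_\kappa=J_\kappa:=\int_{X>0}\etr(-X)|X|^{a-2m+1}\prod_{j=1}^m|X_{(j)}|^{\,k_{m-j}-k_{m-j+1}}(dX)$ with $k_0:=0$. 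Substituting $X=T^HT$ ($T$ upper-triangular, $t_{ii}>0$): by Lemma~\ref{Lax}(2), $|X_{(j)}|=\prod_{i\le j}t_{ii}^2$ and $|X|=\prod_i t_{ii}^2$; $\tr X=\sum_i t_{ii}^2+\sum_{i<j}\|t_{ij}\|^2$; and $(dX)=2^m\prod_i t_{ii}^{4(m-i)+1}(dT)$ by Lemma~\ref{lemma1.3}. After telescoping, the $t_{ii}$-exponent is $2(a+k_i-2(i-1))-1$ for $I_\kappa$ and $2(a-k_{m-i+1}-2(i-1))-1$ for $J_\kappa$; each of the $m(m-1)/2$ off-diagonal quaternion entries contributes $\int_{\Q}e^{-\|t\|^2}(dt)=\pi^2$, and each diagonal entry contributes $\int_0^\infty e^{-t^2}t^{2b-1}dt=\tfrac12\Gamma(b)$, with the $2^m$ and $(\tfrac12)^m$ cancelling. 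Hence $I_\kappa=\pi^{m(m-1)}\prod_i\Gamma(a+k_i-2(i-1))=\mathbb{Q}\Gamma_m(a,\kappa)=(a)_\kappa\mathbb{Q}\Gamma_m(a)$, which proves the first identity; and, after reindexing, $J_\kappa=\pi^{m(m-1)}\prod_{j=1}^m\Gamma\bigl(a-2(j-1)-k_{m+1-j}\bigr)$. Applying the elementary identity $\Gamma(x-n)=(-1)^n\Gamma(x)/(1-x)_n$ with $x=a-2(j-1)$, $n=k_{m+1-j}$ turns each factor into $(-1)^{k_{m+1-j}}\Gamma(a-2(j-1))/(-a+2j-1)_{k_{m+1-j}}$; the signs multiply to $(-1)^k$, and since $\prod_j(-a+2j-1)_{k_{m+1-j}}=\prod_j(-a+2m-1-2(j-1))_{k_j}=(-a+2m-1)_\kappa$ (reindexing back), we get $J_\kappa=(-1)^k\mathbb{Q}\Gamma_m(a)/(-a+2m-1)_\kappa$, proving the second identity.

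The step I expect to be the main obstacle is the bookkeeping: confirming that the powers of $|Z|$, of $|X|_q$, $|X|_d$, $|X|$, and of the $t_{ii}$ cancel to give exactly $|Z|^{-a}$ and the stated $\Gamma$-products, and verifying the distinctness of the $\rho_\lambda$ (without which the $\Delta_Y^\ast$-eigenfunction argument would not pin $f_\kappa$ and $h_\kappa$ down to scalars). Everything else is a routine transcription of the real-case proof.
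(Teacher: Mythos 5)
Your overall route — reduce to $Z=I_m$, use unitary invariance to show the integral is proportional to $C_\kappa(Y)$, evaluate the constant by comparing the coefficient of $y_1^{k_1}\cdots y_m^{k_m}$ via Lemma \ref{lemma2.2} and the $X=T^HT$ substitution of Lemmas \ref{Lax}(2) and \ref{lemma1.3}, then analytically continue — is exactly the paper's, and your bookkeeping (the $|Z|^{-a}$ power count, the exponent $2(a+k_i-2(i-1))-1$, the $\pi^{m(m-1)}$ from the off-diagonal Gaussians, and the $\Gamma(x-n)=(-1)^n\Gamma(x)/(1-x)_n$ manipulation for the second identity, which the paper does not write out) all checks. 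The one structural difference is that where the paper simply averages over ${}_qO(m)$ and invokes the splitting formula of Theorem \ref{th 2.2} to get $f(Y)=f(I_m)C_\kappa(Y)/C_\kappa(I_m)$ in one line, you re-run the $\Delta_Y^\ast$ eigenfunction argument directly on $f_\kappa$.

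That is where there is a genuine gap. You deduce $f_\kappa=\lambda_\kappa C_\kappa$ from $b_\lambda(\rho_\lambda-\rho_\kappa)=0$ by asserting that the $\rho_\lambda$, $\lambda\vdash k$, are pairwise distinct. This is false: for $k=6$ one has $\rho_{(3,3)}=3(3-4)+3(3-8)=-18$ and $\rho_{(4,1,1)}=4\cdot 0+1\cdot(-7)+1\cdot(-11)=-18$. Your own justification — that moving a box from row $j$ to row $i<j$ increases $\rho$ by $2(l_i-l_j)+2+4(j-i)>0$ — only shows that $\rho$ is strictly increasing along the dominance order; it says nothing about incomparable pairs such as $(3,3)$ and $(4,1,1)$, which is precisely where the coincidences occur. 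The argument is repairable, but it needs one more observation: by the highest-weight structure of $C_\kappa(XY)$ as a polynomial in $y$ (Lemma \ref{lemma2.2}), the expansion of $f_\kappa$ in monomial symmetric functions is supported on $\{\lambda:\lambda\leqslant\kappa\}$, hence so is its expansion in the $C_\lambda$ (these being unitriangular against the $M_\mu$); for $\lambda<\kappa$ in the dominance order your box-moving inequality does give $\rho_\lambda<\rho_\kappa$, and only then does the eigenvalue comparison force $b_\lambda=0$. Alternatively, just cite Theorem \ref{th 2.2} as the paper does, which packages this step. Everything after this point in your write-up is correct.
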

\begin{proof}
For $Z=I_m$, we should prove the following equation
$$
\int_{X>0}\etr(-X)|X|^{a-2m+1}C_{\kappa}(XY)(dX)=(a)_{\kappa}\mathbb{Q}\Gamma_m(a)C_{\kappa}(Y).
$$
Let $\displaystyle f(Y)=\int_{X>0}\etr(-X)|X|^{a-2m+1}C_{\kappa}(XY)(dX)$ and put $S=H^HXH$, $H\in {_qO(m)}$. Then
$(dS)=(dX)$ and
\begin{align*}
f(HYH^H)=&\int_{X>0}\etr(-X)|X|^{a-2m+1}C_{\kappa}(XHYH^H)(dX)\\
=&\int_{S>0}\etr(-S)|S|^{a-2m+1}C_{\kappa}(SY)(dS)=f(Y)
\end{align*}
and hence
\begin{align*}
f(Y)=&\int_{{_qO(m)}}f(Y)(dH)=\int_{{_qO(m)}}f(HYH^H)(dH)\\
=&\int_{X>0}\etr(-X)|X|^{a-2m+1}\int_{{_qO(m)}}C_{\kappa}(XHYH^H)(dH)(dX)\\
=&\int_{X>0}\etr(-X)|X|^{a-2m+1}\frac{C_{\kappa}(X)C_{\kappa}(Y)}{C_{\kappa}(I_m)}(dX)\\
=&\frac{C_{\kappa}(Y)}{C_{\kappa}(I_m)}f(I_m).
\end{align*}
Since $f(Y)$ is a symmetric homogeneous polynomial in the latent of
$Y$, it can be assumed without loss of generality that $Y$ is diagonal, $Y=\diag(y_1,\ldots,y_m)$, using (i) of
Definition \ref{def2.1}, $f(Y)=\dfrac{f(I_m)}{C_{\kappa}(I_m)}d_{\kappa}y_1^{k_1}\cdots y_m^{k_m}+\cdots$, since
\begin{align*}
f(Y)=&\int_{X>0}\etr(-X)|X|^{a-2m+1}d_{\kappa}y_1^{k_1}\cdots y_m^{k_m}\times \\
&x_{11}^{k_1-k_2}\Big|\begin{pmatrix}x_{11}&x_{12}\\ x_{21}&x_{22}\end{pmatrix}\Big|^{k_2-k_3}\cdots |X|^{k_m}(dX).
\end{align*}
Put $X=T^HT$, {{where}} $T$ is a upper triangular with positive
diagonal elements. Then
$$
\tr X=\sum_{i\leq j}^{m}t_{ij}^Ht_{ij},\ x_{11}=t_{11}^2,\
\Big|\begin{pmatrix}x_{11}&x_{12}\\ x_{21}&x_{22}\end{pmatrix}\Big|=t_{11}^2t_{22}^2,\cdots,\
|X|=\dprod_{i=1}^{m}t_{ii}^2
$$
(Lemma \ref{Lax} (2)). By {{Lemma}} \ref{lemma1.3},
\begin{align*}f(Y)=&\int_{X>0}\exp {{\bigg(}}-\sum\limits_{i\leq j}^{m}t_{ij}^Ht_{ij}{{\bigg )}}\dprod_{i=1}^{m}t_{ii}^{2a-4m+2}
d_{\kappa}y_{1}^{k_1}\cdots {y_m^{k_m}}\\
&\ {{\times}}\prod_{i=1}^{m}{t_{ii}^{2k_i}}2^m\dprod_{i=1}^mt_{ii}^{4m-4i+1}\dwedge_{i\leq j}^{m}dt_{ij}+\cdots\\
=&d_{\kappa}y_{1}^{k_1}\cdots y_m^{k_m}\pi^{m(m-1)}\prod_{i=1}^{m}\Gamma(a+k_i-2(i-1))+\cdots \\
=&d_{\kappa}y_{1}^{k_1}\cdots y_m^{k_m}(a)_{\kappa}\Q\Gamma_m(a)+\cdots.
\end{align*}
By comparing the coefficients of {{the}} two {{expressions}} of
$f(Y)$, we have
$\dfrac{f(I_m)}{C_{\kappa}(I_m)}=(a)_{\kappa}\mathbb{Q}\Gamma_m(a)$.

When $Z>0$, let $V=Z^{{1/2}}XZ^{{1/2}}$. Then
$(dV)=|Z|_q^{2m-1}(dX)$ and
\begin{align*}
\int_{X>0}&\etr(-XZ)(\det X)^{a-2m+1}C_{\kappa}(XY)(dX)\\
=&|Z|_q^{-a}\int_{X>0}\etr(-Z^{-{1/2}}VZ^{{1/2}})|V|^{a-2m+1}
C_{\kappa}(VZ^{-{1/2}}YZ^{-{1/2}})(dV)\\
=&|Z|_q^{-a}\int_{X>0}\etr(-V)|V|^{a-2m+1}C_{\kappa}(VZ^{-{1/2}}YZ^{-{1/2}})(dV)\\
=&|Z|_q^{-a}(a)_{\kappa}\mathbb{Q}\Gamma_m(a)C_{\kappa}(YZ^{-1}).
\end{align*}

Finally, by analytic continuation, we get {{the}} result on $\Phi_m$
since the left--side of the integrations in the {{theorem}} is
absolutely convergent in $\Phi_m$.
\end{proof}
\begin{definition}
If $f(X)$ is a function of the positive definite $m \times m$
quaternion matrix $X$, the {{Laplace}} transform of $f(X)$ is
defined to be
$$
g(Z)=\mathcal{L}(f(X))=\int_{X>0}\etr(-XZ)f(X)(dX)
$$
which is absolutely convergent for $Z\in\Phi_m$. Note that $\mathcal{L}(\cdot)$ is one to one for $Z-P\in\Phi_m$
where $P$ is a complex positive definite matrix \rm{(cf.\ \cite{kenn})}.
\end{definition}

\begin{theorem}\label{th2.4}
Let $Y\in {_qS(m)}$. Then
$$
\int_{0<X<I_m}|X|^{a-2m+1}|I-X|^{b-2m+1}C_{\kappa}(XY)(dX)=
\frac{\mathbb{Q}\Gamma_m(a,\kappa)\mathbb{Q}\Gamma_m(b)}{\mathbb{Q}\Gamma_{m}(a+b,\kappa)}C_{\kappa}(Y)
$$
for $\Re(a)>2(m-1)$, $\Re(b)>2(m-1)$ and
$$
\int_{0<X<I_m}|X|^{a-2m+1}|I-X|^{b-2m+1}C_{\kappa}(X^{-1}Y)(dX)=
\frac{\mathbb{Q}\Gamma_m(a,-\kappa)\mathbb{Q}\Gamma_m(b)}{\mathbb{Q}\Gamma_{m}(a+b,-\kappa)}C_{\kappa}(Y)
$$
for $\Re(a)>2(m-1)+k_1$, $\Re(b)>2(m-1)$.
\end{theorem}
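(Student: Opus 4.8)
\emph{The plan} is to follow the scheme of the proofs of Theorems~\ref{th 2.2} and~\ref{th2.3}: first show that each side is a scalar multiple of $C_\kappa(Y)$, then identify the scalar by inserting a free Laplace parameter and evaluating an auxiliary double integral in two orders. Write $g(Y)$ for the left-hand side of the first identity. Replacing $X$ by $H^HXH$ for $H\in{_qO(m)}$ and using Lemma~\ref{Lbx}(2) (which leaves $(dX)$ invariant since $|H|_q=1$), together with $|H^HXH|=|X|$, $|H^H(I-X)H|=|I-X|$ and $C_\kappa(HX'YH^H)=C_\kappa(X'Y)$, one gets $g(HYH^H)=g(Y)$; hence $g(Y)=\int_{{_qO(m)}}g(HYH^H)(dH)$, and interchanging this integral with the $X$-integral and invoking Theorem~\ref{th 2.2} with $X_1=X>0$, $X_2=Y$ shows $g(Y)=c_\kappa C_\kappa(Y)$ for a constant $c_\kappa=c_\kappa(a,b,\kappa)$. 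The same reasoning gives $\int_{0<X<I_m}|X|^{a-2m+1}|I-X|^{b-2m+1}C_\kappa(X^{-1}Y)(dX)=c'_\kappa C_\kappa(Y)$.

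To pin down $c_\kappa$, I would introduce, for $Z\in\Phi_m$,
$$
\Theta(Z)=\int_{S>0}\etr(-SZ)\Bigg[\int_{0<X<S}|X|^{a-2m+1}|S-X|^{b-2m+1}C_\kappa(XY)(dX)\Bigg](dS),
$$
which is absolutely convergent when $\Re(a),\Re(b)>2(m-1)$; Tonelli's theorem allows the order of integration to be changed after first restricting to $Y>0$ (so the integrand is nonnegative), the general case following since $\Theta(Z)$ and the expressions below are polynomials in the entries of $Y$. Integrating over $X$ first, substitute $X=S^{1/2}X'S^{1/2}$ for fixed $S$; by Lemma~\ref{Lbx}(2) (exactly as in the proof of Theorem~\ref{th2.3}) the inner integral becomes $|S|^{a+b-2m+1}g(S^{1/2}YS^{1/2})=c_\kappa|S|^{a+b-2m+1}C_\kappa(SY)$, so Theorem~\ref{th2.3} (first formula, $a$ replaced by $a+b$) yields $\Theta(Z)=c_\kappa\,\mathbb{Q}\Gamma_m(a+b,\kappa)|Z|^{-(a+b)}C_\kappa(YZ^{-1})$. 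Integrating over $S$ first, write $S=X+T$ with $T>0$: the inner integral equals $\etr(-XZ)\int_{T>0}\etr(-TZ)|T|^{b-2m+1}(dT)=\etr(-XZ)\,\mathbb{Q}\Gamma_m(b)|Z|^{-b}$ by Theorem~\ref{th2.3} with $\kappa=(0)$, and then $\Theta(Z)=\mathbb{Q}\Gamma_m(b)|Z|^{-b}\int_{X>0}\etr(-XZ)|X|^{a-2m+1}C_\kappa(XY)(dX)=\mathbb{Q}\Gamma_m(a,\kappa)\mathbb{Q}\Gamma_m(b)|Z|^{-(a+b)}C_\kappa(YZ^{-1})$ by Theorem~\ref{th2.3} once more. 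Comparing the two expressions (say at $Z=I_m$, with $Y$ chosen so $C_\kappa(Y)\ne0$) gives $c_\kappa=\mathbb{Q}\Gamma_m(a,\kappa)\mathbb{Q}\Gamma_m(b)/\mathbb{Q}\Gamma_m(a+b,\kappa)$, which is the first formula.

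For the second formula I would run the identical argument with $C_\kappa(XY)$ replaced by $C_\kappa(X^{-1}Y)$: under $X=S^{1/2}X'S^{1/2}$ one has $C_\kappa(X^{-1}Y)=C_\kappa(X'^{-1}S^{-1/2}YS^{-1/2})$, so the inner integral is $c'_\kappa|S|^{a+b-2m+1}C_\kappa(S^{-1}Y)$; applying the \emph{second} formula of Theorem~\ref{th2.3} in one order and the same $S=X+T$ split (with the first formula of Theorem~\ref{th2.3}) in the other gives $c'_\kappa\frac{(-1)^k\mathbb{Q}\Gamma_m(a+b)}{(-(a+b)+2m-1)_\kappa}=\frac{(-1)^k\mathbb{Q}\Gamma_m(a)\mathbb{Q}\Gamma_m(b)}{(-a+2m-1)_\kappa}$, and the elementary identity $\mathbb{Q}\Gamma_m(c,-\kappa)=(-1)^k\mathbb{Q}\Gamma_m(c)/(-c+2m-1)_\kappa$ puts $c'_\kappa$ into the stated form. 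The convergence requirements used above reduce precisely to the stated ranges $\Re(a),\Re(b)>2(m-1)$ for the first formula and $\Re(a)>2(m-1)+k_1$, $\Re(b)>2(m-1)$ for the second (the $X$-integral near $X=0$ in $\Theta'$ and the domain of the second formula of Theorem~\ref{th2.3} both force the extra $k_1$).

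The routine part is the bookkeeping; the points that need genuine care are: that $S^{1/2}YS^{1/2}$ and $S^{-1/2}YS^{-1/2}$ are again Hermitian (so that the Step~1 conclusion applies to them); that the cyclic/similarity relations such as $C_\kappa(S^{1/2}X'S^{1/2}Y)=C_\kappa(X'S^{1/2}YS^{1/2})$ and $C_\kappa(S^{1/2}YS^{1/2})=C_\kappa(SY)$ hold, which they do because $C_\kappa$ is a symmetric function of the eigenvalues; that the Jacobian powers of $|S|$ from Lemma~\ref{Lbx}(2) combine to the clean exponent $a+b-2m+1$, exactly as in the proof of Theorem~\ref{th2.3}; and that the two interchanges of integration order in $\Theta$ (and $\Theta'$) are legitimate, which is where the stated lower bounds on $\Re(a)$ and $\Re(b)$ enter.
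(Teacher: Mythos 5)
Your proof is correct and follows essentially the same route as the paper's: first reduce the left-hand side to a constant multiple of $C_\kappa(Y)$ via the invariance argument and Theorem~\ref{th 2.2}, then identify the constant by evaluating a Laplace-transformed double integral in two orders, using the substitution $X=S^{1/2}X'S^{1/2}$ (the paper's $X=W^{-1/2}UW^{-1/2}$ read backwards) in one order and the translation $S=X+T$ (the paper's $V=W-U$) in the other, each combined with Theorem~\ref{th2.3}. The only differences are cosmetic --- the paper specializes $Z=Y=I_m$ where you carry them along --- plus the welcome fact that you spell out the second formula, which the paper's proof leaves untreated.
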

\begin{proof}
Let
$f(Y)=\displaystyle\int_{0<X<I_m}|X|^{a-2m+1}|I-X|^{b-2m+1}C_{\kappa}(XY)(dX)$.
It is easy to check that
$f(Y)=f(HYH^H),~H\in{_qO(m)}$ and $f(Y)C_{\kappa}(I_m)=f(I_m)C_{\kappa}(Y)$ by Theorem \ref{th 2.2}. Take $Z=I_m$ and $Y=I_m$ in
Theorem \ref{th2.3}. Then
\begin{align*}
\int_{W>0}\etr(-W)|W|^{a+b-2m+1}f(W)(dW)
=&\int_{W>0}\etr(-W)|W|^{a+b-2m+1}\frac{f(I_m)C_{\kappa}(W)}{C_{\kappa(I_m)}}(dW)\\
=&\frac{f(I_m)}{C_{\kappa(I_m)}}\mathbb{Q}\Gamma_{m}(a+b,\kappa)C_{\kappa}(I_m)\\
=&f(I_m)\mathbb{Q}\Gamma_{m}(a+b,\kappa).
\end{align*}
Set $X=W^{-{1/2}}UW^{-{1/2}}$. Then
\begin{align*}
&\int_{W>0}\etr(-W)|W|^{a+b-2m+1}f(W)(dW)\\
&=\int_{W>0}\etr(-W)|W|^{a+b-2m+1}\int_{0<X<I_m}|X|^{a-2m+1}|I-X|^{b-2m+1}C_{\kappa}(XW)(dX)(dW)\\
&=\int_{W>0}\etr(-W)|W|^{a+b-2m+1}\int_{0<U<W}|U|^{a-2m+1}|W|^{-a-b+4m-2}|W-U|^{b-2m+1}\\
&\quad\quad\times C_{\kappa}(W^{{1/2}}UW^{-{1/2}})|W|^{1-2m}(dU)(dW)\\
&=\int_{U>0}\etr(-V-U)\int_{V>0}|U|^{a-2m+1}|V|^{b-2m+1}C_{\kappa}(U)(dV)(dU)\ (\mbox{for}\ V=W-U)\\
&=\int_{U>0}\etr(-U)|U|^{a-2m+1}C_{\kappa}(U)(dU)\int_{V>0}\etr(-V)|V|^{b-2m+1}(dV)\\
&=\Q\Gamma_m(a,\kappa)\Q\Gamma_m(b)C_{\kappa}(I_m).
\end{align*}
So $f(I_m)=\dfrac{\Q\Gamma_m(a,\kappa)\Q\Gamma_m(b)}{\Q\Gamma_m(a+b,\kappa)}C_{\kappa}(I_m)$ and hence
$f(Y)=\dfrac{\Q\Gamma_m(a,\kappa)\Q\Gamma_m(b)}{\Q\Gamma_m(a+b,\kappa)}C_{\kappa}(Y)$.
\end{proof}
\begin{corollary}\label{coro2.1} If $Y\in{_qS(m)}$, then
$$\int_{0<X<I_m}|X|^{a-2m+1}C_{\kappa}(XY)(dX)=\frac{(a)_{\kappa}}{(a+2m-1)_\kappa}
\frac{\mathbb{Q}\Gamma_m(a)\mathbb{Q}\Gamma_m(2m-1)}{\mathbb{Q}\Gamma_{m}(a+2m-1)}C_{\kappa}(Y)$$
{{where}} $\Re(a)>2(m-1)$, { and} $\kappa=(k_1,k_2,{\ldots},k_m)$.
\end{corollary}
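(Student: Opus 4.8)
The plan is to obtain Corollary~\ref{coro2.1} as the specialization $b=2m-1$ of the first identity in Theorem~\ref{th2.4}. The key observation is that when $b=2m-1$ the exponent $b-2m+1$ vanishes, so $|I-X|^{b-2m+1}\equiv 1$ on the whole integration domain $0<X<I_m$, and the left-hand side of Theorem~\ref{th2.4} reduces to exactly the integral appearing in Corollary~\ref{coro2.1}. Moreover the hypothesis $\Re(b)>2(m-1)$ becomes $2m-1>2m-2$, which is automatic, while $\Re(a)>2(m-1)$ is simply retained. Thus Theorem~\ref{th2.4} immediately gives
\[
\int_{0<X<I_m}|X|^{a-2m+1}C_{\kappa}(XY)(dX)=\frac{\mathbb{Q}\Gamma_m(a,\kappa)\,\mathbb{Q}\Gamma_m(2m-1)}{\mathbb{Q}\Gamma_{m}(a+2m-1,\kappa)}\,C_{\kappa}(Y).
\]

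It then remains only to rewrite the ratio of generalized Gamma functions in the form claimed. For this I would invoke the identity $(\alpha)_\kappa=\mathbb{Q}\Gamma_m(\alpha,\kappa)/\mathbb{Q}\Gamma_m(\alpha)$ recorded just before Lemma~\ref{lemma2.1}, applied with $\alpha=a$ and with $\alpha=a+2m-1$; that is, $\mathbb{Q}\Gamma_m(a,\kappa)=(a)_\kappa\,\mathbb{Q}\Gamma_m(a)$ and $\mathbb{Q}\Gamma_m(a+2m-1,\kappa)=(a+2m-1)_\kappa\,\mathbb{Q}\Gamma_m(a+2m-1)$. Substituting these two relations and regrouping the factors turns the coefficient above into $\dfrac{(a)_\kappa}{(a+2m-1)_\kappa}\dfrac{\mathbb{Q}\Gamma_m(a)\,\mathbb{Q}\Gamma_m(2m-1)}{\mathbb{Q}\Gamma_m(a+2m-1)}$, which is precisely the asserted formula.

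There is essentially no genuine obstacle here; the argument is pure bookkeeping built on Theorem~\ref{th2.4}. The only points worth a line of verification are that $|I-X|^{0}=1$ holds identically (not merely almost everywhere) on $\{0<X<I_m\}$, that the convergence conditions of Theorem~\ref{th2.4} are met for the chosen value $b=2m-1$, and that every Pochhammer symbol and Gamma function appearing is well-defined. In particular $(a+2m-1)_\kappa$ and $\mathbb{Q}\Gamma_m(a+2m-1)$ require $\Re(a+2m-1)>2(m-1)$, which follows at once from $\Re(a)>2(m-1)$ since $2m-1>0$, so no extra hypothesis on $a$ beyond the stated one is needed.
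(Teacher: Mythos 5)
Your proof is correct and is exactly the route the paper intends: the corollary is stated immediately after Theorem~\ref{th2.4} with no separate proof, precisely because it is the specialization $b=2m-1$ (so $|I-X|^{b-2m+1}\equiv 1$) combined with the identity $(\alpha)_\kappa=\mathbb{Q}\Gamma_m(\alpha,\kappa)/\mathbb{Q}\Gamma_m(\alpha)$ recorded before Lemma~\ref{lemma2.1}. Your checks that $\Re(b)=2m-1>2(m-1)$ and that all Gamma factors are well-defined are the only points needing verification, and they go through as you say.
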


\section{HYPERGEOMETRIC FUNCTION FOR QUATERNION MATRIX}

\begin{definition}
The hypergeometric functions of {{a}} Hermitian quaternion matrix
argument are given by
\begin{equation}\label{equyy}
{_pF_q(a_1,\cdots,a_p;b_1,\cdots,b_q;X)}=\sum_{k=0}^{\infty}\sum_{\kappa}\frac{(a_1)_{\kappa}\cdots(a_p)_{\kappa}}
{(b_1)_{\kappa}\cdots(b_q)_{\kappa}}\frac{C_{\kappa}(X)}{k!}
\end{equation}
where $\sum\limits_{\kappa}$ denotes summation over all partitions
$\kappa=(k_1,{\ldots},k_m)$, $k_1\geqslant \cdots \geqslant k_m
\geqslant 0$ of $k$ and $X\in{_qS(m)}$.
\end{definition}

\begin{remark}
We have the special case ${_0F_0(A)}=\etr A$ for $A\in{_qS(m)}$. From \cite{kenn}, we have
\begin{enumerate}
\item[\rm{(1)}] If $p<q$, then the hypergeometric series (\ref{equyy}) converges absolutely for all $X$;
\item[\rm{(2)}] If $p=q+1$, then the series (\ref{equyy}) converges absolutely for $\|X\|<1$ and diverges for $\|X\|>1$;
\item[\rm{(3)}] If $p>q$, then the series (\ref{equyy}) diverges unless it terminates.
\end{enumerate}
\end{remark}

\begin{definition}
The hypergeometric functions of Hermitian quaternion matrices $X$,
$Y$ are given by
\begin{equation}{_pF_q}^{m}(a_1,\cdots,a_p;b_1,\cdots,b_q;X,Y)=\sum_{k=0}^{\infty}\sum_{\kappa}\frac{(a_1)_{\kappa}\cdots(a_p)_{\kappa}}
{(b_1)_{\kappa}\cdots(b_q)_{\kappa}}\frac{C_{\kappa}(X)C_{\kappa}(Y)}{C_{\kappa}(I_m)k!}\end{equation}
\end{definition}

By Theorem \ref{th 2.2}, we have
\begin{theorem}\label{th3.1}
If $X,\ Y\in{_qS(m)}$ with $X>0$, then
$$
\int_{_qO(m)}{_pF_q}(a_1,\cdots,a_p;b_1,\cdots,b_q;XHYH^H)(dH)={_pF_q}^{m}(a_1,\cdots,a_p;b_1,\cdots,b_q;X,Y).
$$
\end{theorem}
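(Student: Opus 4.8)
The plan is to follow the classical template for an identity of this shape: expand the left--hand side as its defining zonal--polynomial series, integrate term by term over ${_qO(m)}$, and apply Theorem~\ref{th 2.2} to each summand. First I would check that the integrand is well defined for every $H\in{_qO(m)}$: since $X>0$, the matrix $XHYH^H$ is similar to the Hermitian matrix $X^{1/2}HYH^HX^{1/2}$ (conjugate by $X^{1/2}$), so it has real eigenvalues, and by the extension of $C_\kappa$ to non--Hermitian arguments $C_\kappa(XHYH^H)=C_\kappa(X^{1/2}HYH^HX^{1/2})$. Hence
$$
{_pF_q}(a_1,\cdots,a_p;b_1,\cdots,b_q;XHYH^H)=\sum_{k=0}^{\infty}\sum_{\kappa}\frac{(a_1)_\kappa\cdots(a_p)_\kappa}{(b_1)_\kappa\cdots(b_q)_\kappa}\frac{C_\kappa(XHYH^H)}{k!}
$$
is meaningful pointwise on ${_qO(m)}$.

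The second step is to justify interchanging $\displaystyle\int_{{_qO(m)}}(dH)$ with the double sum over $k$ and $\kappa$. When $p\le q$ the series converges absolutely and uniformly as $H$ ranges over the compact group ${_qO(m)}$, since $\|X^{1/2}HYH^HX^{1/2}\|\le\|X\|\,\|Y\|$ bounds the eigenvalues of the argument uniformly in $H$, so term--by--term integration is immediate. When $p=q+1$ I would first prove the identity under the extra hypothesis $\|X\|\,\|Y\|<1$, which forces $\|XHYH^H\|<1$ uniformly in $H$ and again gives uniform convergence of both series, and then remove the restriction by analytic continuation in the entries of $X$ and $Y$. The case $p>q$ is handled termwise because the series terminates.

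Granting the interchange, the scalars $(a_1)_\kappa\cdots(a_p)_\kappa$, $(b_1)_\kappa\cdots(b_q)_\kappa$ and $k!$ do not depend on $H$ and pull out of the integral, leaving
$$
\int_{{_qO(m)}}{_pF_q}(\cdots;XHYH^H)(dH)=\sum_{k=0}^{\infty}\sum_{\kappa}\frac{(a_1)_\kappa\cdots(a_p)_\kappa}{(b_1)_\kappa\cdots(b_q)_\kappa}\frac{1}{k!}\int_{{_qO(m)}}C_\kappa(XHYH^H)(dH).
$$
Now Theorem~\ref{th 2.2}, applicable since $X>0$ and $Y\in{_qS(m)}$, yields $\int_{{_qO(m)}}C_\kappa(XHYH^H)(dH)=C_\kappa(X)C_\kappa(Y)/C_\kappa(I_m)$. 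Substituting this turns the right--hand side into $\sum_{k}\sum_{\kappa}\dfrac{(a_1)_\kappa\cdots(a_p)_\kappa}{(b_1)_\kappa\cdots(b_q)_\kappa}\dfrac{C_\kappa(X)C_\kappa(Y)}{C_\kappa(I_m)\,k!}$, which is exactly the definition of ${_pF_q}^m(a_1,\cdots,a_p;b_1,\cdots,b_q;X,Y)$.

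The only real obstacle is the legitimacy of the term--by--term integration, concentrated entirely in the $p=q+1$ case: there the series converges only for $\|X\|$ small, so one must set up the uniform bound on the eigenvalues of $XHYH^H$ over the compact group within the region of convergence and then appeal to analytic continuation to reach an arbitrary $X>0$. Once Theorem~\ref{th 2.2} is in hand, everything else is a formal rearrangement.
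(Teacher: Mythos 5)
Your argument is exactly the one the paper intends: the paper gives no written proof, simply asserting that the theorem follows from Theorem~\ref{th 2.2}, i.e.\ termwise integration of the defining series followed by the splitting formula $\int_{{_qO(m)}}C_\kappa(XHYH^H)(dH)=C_\kappa(X)C_\kappa(Y)/C_\kappa(I_m)$. Your additional care about well-definedness of $C_\kappa(XHYH^H)$ and the interchange of sum and integral (uniform convergence over the compact group, analytic continuation in the $p=q+1$ case) only makes the same argument more rigorous.
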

By Theorem \ref{th2.3}, we also have
\begin{theorem}\label{th3.2}
Let $Z\in{_qS(m)}$ and suppose $p\leqslant q$, $\Re(a)>2(m-1)$. Then
\begin{align*}
\int_{X>0}\etr(-XZ)&(\det X)^{a-2m+1}{_pF_q}(a_1,\cdots,a_p;b_1,\cdots,b_q;X)(dX)\\
&=\Q\Gamma_m(a)(\det Z)^{-a}{_{p+1}F_q}(a_1,\cdots,a_p,a;b_1,\cdots,b_q;Z^{-1})
\end{align*}
and
\begin{align*}
\int_{X>0}\etr(-XZ)&(\det X)^{a-2m+1}{_pF_q}^{m}(a_1,\cdots,a_p;b_1,\cdots,b_q;X,Y)(dX)\\
&=\Q\Gamma_m(a)(\det Z)^{-a}{_{p+1}F_q}^{m}(a_1,\cdots,a_p,a;b_1,\cdots,b_q;Z^{-1},Y)
\end{align*}
for all $Z\in \Phi_m$ when $p<q$ and  for
$\|[\re(Z)]^{-1}\|<1$ when $p=q$.
\end{theorem}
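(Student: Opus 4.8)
The plan is to expand ${}_pF_q$ into its defining zonal‑polynomial series, integrate term by term, and apply Theorem~\ref{th2.3} (with $Y=I_m$) to each summand. Writing ${}_pF_q(a_1,\dots,a_p;b_1,\dots,b_q;X)=\sum_{k\ge0}\sum_\kappa\frac{(a_1)_\kappa\cdots(a_p)_\kappa}{(b_1)_\kappa\cdots(b_q)_\kappa}\frac{C_\kappa(X)}{k!}$ and swapping sum and integral, the $\kappa$‑term is $\frac{(a_1)_\kappa\cdots(a_p)_\kappa}{(b_1)_\kappa\cdots(b_q)_\kappa\,k!}\int_{X>0}\etr(-XZ)|X|^{a-2m+1}C_\kappa(X)(dX)$, which by Theorem~\ref{th2.3} equals $\frac{(a_1)_\kappa\cdots(a_p)_\kappa\,(a)_\kappa}{(b_1)_\kappa\cdots(b_q)_\kappa\,k!}\,\Q\Gamma_m(a)\,|Z|^{-a}\,C_\kappa(Z^{-1})$. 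Pulling $\Q\Gamma_m(a)|Z|^{-a}$ out of the double sum leaves precisely $\sum_{k\ge0}\sum_\kappa\frac{(a_1)_\kappa\cdots(a_p)_\kappa\,(a)_\kappa}{(b_1)_\kappa\cdots(b_q)_\kappa}\frac{C_\kappa(Z^{-1})}{k!}={}_{p+1}F_q(a_1,\dots,a_p,a;b_1,\dots,b_q;Z^{-1})$ — the extra Pochhammer factor $(a)_\kappa$ produced by Theorem~\ref{th2.3} is exactly the one that turns ${}_pF_q$ into ${}_{p+1}F_q$. The two‑matrix identity is obtained identically, since in the series for ${}_pF_q^{m}(\dots;X,Y)$ only $C_\kappa(X)$ depends on the integration variable while the factor $C_\kappa(Y)/C_\kappa(I_m)$ rides unchanged through the same computation to give ${}_{p+1}F_q^{m}(\dots;Z^{-1},Y)$.

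Granting the interchange, the identity is thus essentially a one‑line computation; the substance lies in justifying that interchange and in pinning down the admissible range of $Z$. I would first prove the identity for $Z$ a real positive definite matrix. On $\{X>0\}$ the factors $\etr(-XZ)$, $|X|^{\Re a-2m+1}$ and $C_\kappa(X)$ are all nonnegative: the last because every coefficient in $C_\kappa(X)=\sum_{\lambda\le\kappa}c_{(\kappa,\lambda)}M_\lambda(X)$ is nonnegative, as one reads off inductively from the recursion (\ref{eqxxx}) (and as displayed in the Table). Hence Tonelli's theorem licenses the term‑by‑term integration with no extra hypothesis, and the resulting series is $\Q\Gamma_m(a)|Z|^{-a}\sum_{k,\kappa}\frac{(a_1)_\kappa\cdots(a_p)_\kappa\,(a)_\kappa}{(b_1)_\kappa\cdots(b_q)_\kappa}\frac{C_\kappa(Z^{-1})}{k!}$. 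Since $Z^{-1}$ is again positive definite, the convergence properties of quaternion hypergeometric series recalled in the Remark of \S 4 give absolute convergence for every such $Z$ when $p<q$, and, when $p=q$, exactly when $\|Z^{-1}\|<1$.

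The remaining step is to pass from real positive definite $Z$ to the full domain by analytic continuation, as in the proof of Theorem~\ref{th2.3}: both sides are analytic in the entries of $Z$ on $\Phi_m$ (respectively on $\{Z:\re(Z)>0,\ \|[\re(Z)]^{-1}\|<1\}$ in the case $p=q$), and they agree on the real positive definite matrices lying in that set (with $\|Z^{-1}\|<1$ imposed when $p=q$), which form a uniqueness set for functions holomorphic there; hence the two sides coincide on the whole connected domain. I expect the genuine obstacle to be exactly this last step when $p=q$: verifying that the identity, a priori established only for positive definite $Z$ with $\|Z^{-1}\|<1$, continues to all $Z$ with $\|[\re(Z)]^{-1}\|<1$, and that the ${}_{q+1}F_q$‑series on the right actually represents that continuation there rather than being merely its germ along the positive definite locus. (Trying to apply Tonelli directly on $\Phi_m$ via the absolute‑convergence clause of Theorem~\ref{th2.3} is less clean, because there the term‑by‑term values $|Z|^{-a}C_\kappa(Z^{-1})$ involve a $Z^{-1}$ that need not be positive definite and so are not obviously summable.)
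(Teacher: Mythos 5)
Your proposal is correct and follows exactly the route the paper intends: the paper offers no written proof beyond the phrase ``By Theorem \ref{th2.3}, we also have,'' i.e.\ term-by-term integration of the zonal-polynomial series, with the Pochhammer factor $(a)_\kappa$ from Theorem \ref{th2.3} supplying the extra upper parameter. Your additional care about justifying the interchange (via nonnegativity of the coefficients $c_{(\kappa,\lambda)}$ and Tonelli) and about the analytic continuation to $\Phi_m$, in particular in the delicate case $p=q$, goes beyond what the paper records and correctly identifies the only points where real work is needed.
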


\begin{corollary}\label{Coroz}
Let $Z\in{_qS(m)}$ with $\|Z\|<1$ and $\Re(a)>2(m-1)$. Then ${_1F_0}(a;Z)=|I_m-Z|^{-a}$.
\end{corollary}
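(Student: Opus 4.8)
The plan is to recognize ${_1F_0}(a;Z)$ as a quaternion Laplace integral and evaluate that integral in closed form using Theorem~\ref{th2.3}. First I would reduce to the case where $a$ is real with $a>2(m-1)$ and $0<Z<I_m$ (so that $I_m-Z>0$, hence $I_m-Z\in\Phi_m$). This reduction is harmless: the set of Hermitian $Z$ with $\|Z\|<1$ is connected, $\{0<Z<I_m\}$ is an open subset of ${_qS(m)}$, and both sides of the claimed identity are real-analytic in the entries of $Z$ on $\{\|Z\|<1\}$ and holomorphic in $a$ on $\{\Re(a)>2(m-1)\}$, so the general statement follows by analytic continuation at the end.

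The computation itself is short. Apply Theorem~\ref{th2.3} with the matrix ``$Z$'' there taken to be $I_m$ and ``$Y$'' taken to be $Z$: for every partition $\kappa$ of every $k\ge0$,
\[
(a)_\kappa\,C_\kappa(Z)=\frac{1}{\Q\Gamma_m(a)}\int_{X>0}\etr(-X)\,|X|^{a-2m+1}\,C_\kappa(XZ)\,(dX).
\]
Dividing by $k!$, summing over all $\kappa$ and all $k\ge0$, and interchanging summation with integration gives
\[
{_1F_0}(a;Z)=\frac{1}{\Q\Gamma_m(a)}\int_{X>0}\etr(-X)\,|X|^{a-2m+1}\Big(\sum_{k=0}^{\infty}\sum_{\kappa}\frac{C_\kappa(XZ)}{k!}\Big)(dX).
\]
By the convention $C_\kappa(XZ)=C_\kappa(X^{1/2}ZX^{1/2})$, condition (iii) of Definition~\ref{def2.1} applied to the Hermitian matrix $X^{1/2}ZX^{1/2}$, and the identity $\tr(X^{1/2}ZX^{1/2})=\tr(XZ)$, the inner double sum equals $\sum_{k\ge0}(\tr XZ)^k/k!=\etr(XZ)$, so the integrand collapses to $\etr\!\big(-X(I_m-Z)\big)\,|X|^{a-2m+1}$. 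A second application of Theorem~\ref{th2.3}, now with $\kappa=(0)$ (so $C_{(0)}\equiv1$ and $(a)_{(0)}=1$) and with ``$Z$'' replaced by $I_m-Z$, evaluates the remaining integral as $\Q\Gamma_m(a)\,|I_m-Z|^{-a}$, whence ${_1F_0}(a;Z)=|I_m-Z|^{-a}$.

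The only point requiring care, and the one I expect to be the main obstacle, is the term-by-term integration; it is benign here. For $X>0$ and $Z>0$ the matrix $X^{1/2}ZX^{1/2}$ is positive definite, so $C_\kappa(XZ)\ge0$ for every $\kappa$, and $(a)_\kappa=\prod_{j=1}^{m}(a-2(j-1))_{k_j}>0$ when $a>2(m-1)$. Hence every term of the double series is nonnegative, Tonelli's theorem licenses the interchange, and finiteness of the resulting integral is exactly the convergence of $\int_{X>0}\etr(-X(I_m-Z))|X|^{a-2m+1}(dX)$ on $\Phi_m$. Passing back to complex $a$ with $\Re(a)>2(m-1)$ and to arbitrary $Z\in{_qS(m)}$ with $\|Z\|<1$ is then the same analytic-continuation argument used in Theorems~\ref{th2.3} and~\ref{th2.4}. (Alternatively, the identity can be read off Theorem~\ref{th3.2} with $p=q=0$: since ${_0F_0}(X)=\etr X$, its left side becomes $\int_{X>0}\etr(-X(Z-I_m))|X|^{a-2m+1}(dX)=\Q\Gamma_m(a)|Z-I_m|^{-a}$ for $Z>I_m$, while its right side is $\Q\Gamma_m(a)|Z|^{-a}\,{_1F_0}(a;Z^{-1})$; comparing and using $|I_m-Z^{-1}|=|Z|^{-1}|Z-I_m|$ gives the claim after the substitution $Z\mapsto Z^{-1}$.)
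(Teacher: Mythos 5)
Your proof is correct and takes essentially the same route as the paper: both reduce ${_1F_0}(a;Z)$ for $0<Z<I_m$ to the matrix Gamma integral $\int_{X>0}\etr(-X(I_m-Z))|X|^{a-2m+1}(dX)=\Q\Gamma_m(a)|I_m-Z|^{-a}$ via Theorem~\ref{th2.3} and finish by analytic continuation, and your parenthetical alternative (Theorem~\ref{th3.2} with $p=q=0$ and the substitution $Z\mapsto Z^{-1}$) is literally the paper's argument. The only substantive addition on your side is the explicit Tonelli justification for the term-by-term integration, which the paper leaves implicit inside Theorem~\ref{th3.2}.
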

\begin{proof}
Assume that $0<Z<I_m$. By {{Theorem \ref{th3.2}}},
$$\int_{X>0}\etr(-XZ^{-1})|X|^{a-2m+1}\etr(X)(dX)=\mathbb{Q}\Gamma_{m}(a)|Z|^a{_1F_0}(a,Z).$$
Let $X=Z^{{1/2}}UZ^{{1/2}}$, then $(dX)=|Z|^{2m-1}(dU)$ by Lemma
\ref{Lbx} (2) and hence
\begin{align*}
\int_{X>0}\etr(-XZ^{-1})|X|^{a-2m+1}&\etr(X)(dX)\\
=&\int_{X>0}\etr(X(I-Z^{-1}))|X|^{a-2m+1}(dX)\\
=&|Z|^{a}\int_{U>0}\etr(-U(I-Z))|U|^{a-2m+1}(dU).
\end{align*}

Put $P=(I-Z)^{{1/2}}U(I-Z)^{{1/2}}$. Then
\begin{align*}
\int_{U>0}&\etr(-U(I-Z))|U|^{a-2m+1}(dU)\\
=&\int_{P>0}|I-Z|^{-a+2m-1}|P|^{a-2m+1}\etr(-P)|I-Z|^{-2m+1}(dP)\\
=&|I-Z|^{-a}\mathbb{Q}\Gamma_{m}(a).
\end{align*}
Finally, we have ${_1F_0}(a;Z)=|I_m-Z|^{-a}$ for $Z\in{_qS(m)}$ with $\|Z\|<1$, by analytic continuity.
\end{proof}

\begin{theorem}
Let $X\in\Q^{m\times n}$ $(m\leq n)$ and $H=(H_1|H_2)\in {_qO(n)}$, $H_1\in{_qV_{m,n}}$. Then
$\displaystyle
{_0F_1}(2n,4XX^H)=\int_{_qO(n)}\exp(4\Rtr(XH_1))(dH).
$
\end{theorem}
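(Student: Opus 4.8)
The plan is to check that the two sides, regarded as functions of the positive semidefinite matrix $S=XX^H$, have the same Laplace transform, and then to invoke the injectivity of $\mathcal L$ recorded after Theorem~\ref{th2.3}. First I would note that $\phi(X):=\int_{_qO(n)}\exp(4\Rtr(XH_1))(dH)$ satisfies $\phi(UXV)=\phi(X)$ for $U\in{_qO(m)}$, $V\in{_qO(n)}$: this follows from invariance of the normalized measure on ${_qO(n)}$ under $H\mapsto VH$ and under $H\mapsto H\,\diag(U,I_{n-m})$, together with $\Rtr(UXH_1)=\Rtr(XH_1U)$. Hence $\phi$ is an entire function of $X$ depending only on $S=XX^H$, say $\phi(X)=\Phi(S)$, and both $\Phi(S)$ and ${_0F_1}(2n;4S)$ grow at most exponentially in $S$, so their Laplace transforms are defined on a suitable right half-plane and it is enough to compare them there.

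To transform the right-hand side I would change variables $S=XX^H$ by applying Corollary~\ref{coro1.1} to $X^H\in\Q^{n\times m}$ (for which $(X^H)^H X^H=XX^H=S$): this yields $(dX)=2^{-m}|S|^{2n-2m+1}(dS)\wedge(U^HdU)$ with $U$ ranging over ${_qV_{m,n}}$. Integrating out $U$, replacing $\Phi(XX^H)$ by $\phi(X)$, and interchanging integrals, one obtains $\int_{S>0}\etr(-SZ)|S|^{2n-2m+1}\Phi(S)(dS)=\dfrac{2^m}{\vol({_qV_{m,n}})}\int_{_qO(n)}I(H)\,(dH)$, where $I(H):=\int_{\Q^{m\times n}}\etr(-XX^HZ)\exp(4\Rtr(XH_1))(dX)$. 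For fixed $H$ this inner integral is Gaussian: substituting $Y=Z^{1/2}X$ (Lemma~\ref{Lbx}(1) gives the Jacobian) turns the exponent into $-\Rtr(YY^H)+4\Rtr(YH_1Z^{-1/2})$, and completing the square produces $-\Rtr\bigl((Y-2Z^{-1/2}H_1^H)(Y-2Z^{-1/2}H_1^H)^H\bigr)+4\Rtr(Z^{-1/2}H_1^HH_1Z^{-1/2})$; since $H_1^HH_1=I_m$ the constant term is just $4\Rtr(Z^{-1})$, independent of $H_1$. Therefore $I(H)=|Z|^{-2n}\exp(4\Rtr(Z^{-1}))\int_{\Q^{m\times n}}\exp(-\Rtr(WW^H))(dW)=|Z|^{-2n}\pi^{2mn}\exp(4\Rtr(Z^{-1}))$ for every $H$, and with $\vol({_qV_{m,n}})=2^m\pi^{2mn}/\mathbb{Q}\Gamma_m(2n)$ (as computed in \S2) the whole transform collapses to $\mathbb{Q}\Gamma_m(2n)|Z|^{-2n}\exp(4\Rtr(Z^{-1}))$.

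For the left-hand side I would expand ${_0F_1}(2n;4S)=\sum_{k\ge0}\frac{4^k}{k!}\sum_{\kappa}\frac{C_\kappa(S)}{(2n)_\kappa}$ and integrate term by term against $\etr(-SZ)|S|^{2n-2m+1}$. Theorem~\ref{th2.3} with $a=2n$ and $Y=I_m$ gives $\int_{S>0}\etr(-SZ)|S|^{2n-2m+1}C_\kappa(S)(dS)=(2n)_\kappa\mathbb{Q}\Gamma_m(2n)|Z|^{-2n}C_\kappa(Z^{-1})$, so the factors $(2n)_\kappa$ cancel, and by $\sum_\kappa C_\kappa(Z^{-1})=(\tr Z^{-1})^k$ the transform equals $\mathbb{Q}\Gamma_m(2n)|Z|^{-2n}\sum_{k\ge0}\frac{4^k}{k!}(\tr Z^{-1})^k=\mathbb{Q}\Gamma_m(2n)|Z|^{-2n}\exp(4\Rtr(Z^{-1}))$. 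This is the value already found for the right-hand side, so $\mathcal L(\Phi)=\mathcal L\bigl({_0F_1}(2n;4\,\cdot\,)\bigr)$, and injectivity of $\mathcal L$ gives $\Phi(S)={_0F_1}(2n;4S)$, which is the asserted identity.

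The step I expect to be most delicate is the change of variables $S=XX^H$: one must apply Corollary~\ref{coro1.1} to $X^H$ rather than $X$ and be scrupulous about which determinant ($|\cdot|$ versus $|\cdot|_q=|\cdot|_d$, which differ by a square) enters each Jacobian, since a stray square would shift the power of $|Z|$ and destroy the match. The other point needing care is the quaternionic completion of the square, where only $\Rtr$ is cyclic; it is exactly the relation $H_1^HH_1=I_m$ that kills the $H_1$-dependence of the constant term and makes the final integration over ${_qO(n)}$ trivial. Verifying the Fubini interchange (by absolute convergence on the half-plane) and passing from $Z>0$ to the stated form by analyticity are routine.
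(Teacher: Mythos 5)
Your proposal is correct and follows essentially the same route as the paper: Laplace-transform both sides in $S=XX^H$, evaluate the transform of the integral side by completing the square in a quaternion Gaussian integral (yielding $\mathbb{Q}\Gamma_m(2n)|Z|^{-2n}\etr(4Z^{-1})$), evaluate the transform of ${_0F_1}$ via Theorem \ref{th2.3} (the paper packages this as Theorem \ref{th3.2}), and conclude by injectivity of $\mathcal{L}$ plus analytic continuation. The only differences are expository — you make explicit the bi-invariance of $\phi$ and the normalization via $\vol({_qV_{m,n}})$, which the paper leaves implicit.
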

\begin{proof}
We {{ use }} the same method as in the proof of \cite[Theorem
7.4.1]{robb}. Assume that $\rank X=m$. Applying the Laplace
transform to $\displaystyle
|X|_q^{2n-2m+1}\int_{_qO(n)}\exp(4\Rtr(XH_1))(dH)$ and
$|X|_q^{2n-2m+1}{_0F_1}(2n,4XX^H)${,} respectively, we have
\begin{align*}
g_l(Z)=&\int_{XX^H>0}\etr(-XX^HZ)|X|_q^{2n-2m+1}\int_{_qO(n)}\exp(4\Rtr(XH_1))(dH)(dXX^H)\\
g_r(Z)=&\int_{XX^H>0}\etr(-XX^HZ)|X|_q^{2n-2m+1}{_0F_1}(2n,4XX^H)(dXX^H).
\end{align*}
Since $(dX)=2^{-m}|X|_q^{2n-2m+1}(dXX^H)(U_1^HdU_1)$, it follows
that
$$
g_l(Z)=\frac{\Q\Gamma_m(2n)}{\pi^{2mn}}\int_{XX^H>0}\int_{_qO(n)}\etr(-XX^HZ)\exp(4\Rtr(XH_1))(dH)(dX).
$$
Let $Z>0$ and put $X=Z^{{-1/2}}Y$. Then $(dX)=|Z|_q^{-n}(dY)$ and
hence
\begin{align*}
g_l(Z)=&\frac{\Q\Gamma_m(2n)}{|Z|_q^n\pi^{2mn}}\int_{YY^H>0}\int_{_qO(n)}
\etr(2(YH_1Z^{{-1/2}}+Z^{{-1/2}}H_1^HY^H)-YY^H)(dH)(dY)\\
=&\frac{\Q\Gamma_m(2n)}{|Z|_q^n\pi^{2mn}}\etr(4Z^{-1})\int_{YY^H>0}\int_{_qO(n)}
\etr(-(Y-2Z^{{-1/2}}H_1^H)(Y-2Z^{{-1/2}} H_1^H)^H)(dH)(dY).
\end{align*}
Note
$\dfrac{1}{\pi^{2mn}}\etr(-(Y-2Z^{{-1/2}}H_1^H)(Y-2Z^{{-1/2}}H_1^H)^H)$
is the density function of $\Q N_{m\times n}(2Z^{{-1/2}}H_1^H,$
$2I_m\otimes I_n)$. Thus
$g_l(Z)={\Q\Gamma_m(2n)}|Z|_q^{-n}\etr(4Z^{-1})$.

On the other hand, by Theorem \ref{th3.2}
\begin{align*}
g_r(Z)=&\Q\Gamma_m(2n)\det(Z)^{-2n}{_1F_1}(2n,2n,4Z^{-1})\\
=&\Q\Gamma_m(2n)|Z|_q^{-n}{_0F_0}(4Z^{-1})\\
=&\Q\Gamma_m(2n)|Z|_q^{-n}\etr(4Z^{-1}).
\end{align*}
Then $g_l(Z)=g_r(Z)$, $\forall\,Z\in\Phi_m$ by analytic continuation.
\end{proof}

\section{THE DISTRIBUTION OF EIGENVALUES}

The joint density function of the eigenvalues of complex central
{{Wishart}} matrix is given in \cite{jacobian} and its
{{distribution}} of the maximum and the minimum eigenvalues is shown
in \cite{trm}. In this section, we generalize some results in
\cite{trm, jacobian} to the quaternion cases.

Let $W=AA^H\sim\Q W_m(n,\Sigma)~~(n\geqslant m)$, $A \sim\Q N(0,I_n\otimes \Sigma)$. The density function of $W$ is
given by (\ref{equaaa}). Let $W=VDV^H$. Then $(dW)=(2\pi^2)^{-m}\dprod^m_{i<j}(\lambda_i-\lambda_j)^4(dD)\bigwedge
(V^HdV)$ by Lemma \ref{lemma1.5}. Then the differential form of the density of $W$ is
$$
\frac{2^{2mn}}{\Q\Gamma_m(2n)|\Sigma|^{2n}}\exp(\Rtr(-2\Sigma^{-1}W))|W|^{2n-2m+1}
(2\pi^2)^{-m}\prod^m_{i<j}(\lambda_i-\lambda_j)^4(dD)\bigwedge (V^HdV).
$$
Integrating {{the}} above equation on $(V^HdV)$, {{by Theorem
\ref{th3.1} we have}}

\begin{align*}
\int&\frac{2^{2mn}}{\Q\Gamma_m(2n)|\Sigma|^{2n}}\exp(\Rtr(-2\Sigma^{-1}W))|W|^{2n-2m+1}(2\pi^2)^{-m}\dprod^m_{i<j}
(\lambda_i-\lambda_j)^4(dD)\dwedge(V^HdV)\\
&=\frac{2^{m}\pi^{2m^2-2m}}{\Q\Gamma_m(2m)|\Sigma|^{2n}}\int\frac{2^{2mn}}{\Q\Gamma_m(2n)}
\exp(\Rtr(-2\Sigma^{-1}W))|W|^{2n-2m+1}\dprod^m_{i<j}(\lambda_i-\lambda_j)^4(dD)\dwedge (dV)\\
&=\frac{2^{2mn}\pi^{2m^2-2m}}{\Q\Gamma_m(2m)\Q\Gamma_m(2n)|\Sigma|^{2n}}{_0F_0}(-2\Sigma^{-1},D)|D|^{2n-2m+1}\dprod^m_{i<j}(\lambda_i-\lambda_j)^4(dD)
\end{align*}
which gives the joint density of the eigenvalues. When $\Sigma=\sigma^2 I_n $, the joint density of the eigenvalues
of $W$ is
\begin{equation}\label{equttt}
\frac{2^{2mn}\pi^{2m^2-2m}}{\Q\Gamma_m(2m)\Q\Gamma_m(2n)|\sigma^2|^{2nm}}|D|^{2n-2m+1}
\dprod^m_{i<j}(\lambda_i-\lambda_j)^4\exp\bigg(-\frac{1}{2\sigma^2}\sum_{i=1}^m\lambda_i\bigg)(dD)
\end{equation}

Let $W\sim\Q W_m(n,\Sigma)(n\geqslant m)$ and $\Delta$ be a $m\times m$ positive definite quaternion matrix.
We will present the distributions of $P(W>\Delta)$ and $P(W<\Delta)$ as follows
\begin{theorem}\label{th4.1}
Let $W$ and $\Delta$ be as above. Then
\begin{align*}
P(W<\Delta)=&\frac{2^{2mn}\Q\Gamma_m(2m-1)}{\Q\Gamma_m(2n+2m-1)}
\frac{|\Delta|^{2n}}{|\Sigma|^{2n}}{_1F_1}(2n,2n+2m-1,-2\Sigma^{-1}\Delta)\\
P(W>\Delta)=&\sum_{k=0}^{m(2n-2m+1)}\widehat{\sum_{\kappa}}\frac{C_{\kappa}(2\Sigma^{-1}\Delta)}{k!}
\etr(-2\Sigma^{-1}\Delta),
\end{align*}
where $\widehat{\sum}$ denotes summation over the partitions
$\kappa=(k_1,{\ldots},k_m)$ of $k$ with $k_1\leqslant 2n-2m+1$.
\end{theorem}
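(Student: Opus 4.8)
The plan is to reduce both probabilities to integrals over the canonical regions $\{0<U<I_m\}$ and $\{U>I_m\}$ by the change of variable $W=\Delta^{1/2}U\Delta^{1/2}$. This map is a bijection of $\{0<W<\Delta\}$ onto $\{0<U<I_m\}$ and of $\{W>\Delta\}$ onto $\{U>I_m\}$; by Lemma \ref{Lbx}(2) its Jacobian is $(dW)=|\Delta|^{2m-1}(dU)$, while $|W|=|\Delta|\,|U|$ and $\Rtr(\Sigma^{-1}W)=\Rtr(\Delta^{1/2}\Sigma^{-1}\Delta^{1/2}U)$. Writing $\Psi=2\Delta^{1/2}\Sigma^{-1}\Delta^{1/2}$ and $p=2n-2m+1$, and noting $|\Psi|^{2n}=2^{2mn}|\Delta|^{2n}/|\Sigma|^{2n}$, the density (\ref{equaaa}) collapses to $P(W<\Delta)=\frac{|\Psi|^{2n}}{\Q\Gamma_m(2n)}\int_{0<U<I_m}\etr(-\Psi U)|U|^{p}(dU)$ and $P(W>\Delta)=\frac{|\Psi|^{2n}}{\Q\Gamma_m(2n)}\int_{U>I_m}\etr(-\Psi U)|U|^{p}(dU)$. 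Because $C_\kappa$ and $\Rtr$ are cyclic, $C_\kappa(\Psi)=C_\kappa(2\Sigma^{-1}\Delta)$ and $\etr(-\Psi)=\etr(-2\Sigma^{-1}\Delta)$, so everything comes down to the two model integrals, after which $\Psi$ is restored.

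For the first integral I would expand $\etr(-\Psi U)={_0F_0}(-\Psi U)=\sum_{k}\sum_\kappa(-1)^{k}C_\kappa(U\Psi)/k!$ (cyclicity) — a series converging uniformly on the bounded set $\{0<U<I_m\}$ — and integrate it term by term using Corollary \ref{coro2.1} with $a=2n$, which gives $\int_{0<U<I_m}|U|^{p}C_\kappa(U\Psi)(dU)=\frac{(2n)_\kappa}{(2n+2m-1)_\kappa}\frac{\Q\Gamma_m(2n)\Q\Gamma_m(2m-1)}{\Q\Gamma_m(2n+2m-1)}C_\kappa(\Psi)$. Re-summing, the series $\sum_{k}\sum_\kappa\frac{(2n)_\kappa}{(2n+2m-1)_\kappa}\frac{C_\kappa(-\Psi)}{k!}$ is recognized as ${_1F_1}(2n;2n+2m-1;-\Psi)$; putting back $|\Psi|^{2n}=2^{2mn}|\Delta|^{2n}/|\Sigma|^{2n}$ and writing $-2\Sigma^{-1}\Delta$ for $-\Psi$ inside ${_1F_1}$ produces the first formula.

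The second integral is the one where I expect the real work. Setting $U=I_m+T$ ($T>0$) pulls out $\etr(-\Psi)$ and leaves $\int_{T>0}\etr(-\Psi T)|I_m+T|^{p}(dT)$; the substitution $T=\Psi^{-1/2}V\Psi^{-1/2}$ (Jacobian $|\Psi|^{-(2m-1)}$) replaces $\etr(-\Psi T)$ by $\etr(-V)$ and $|I_m+T|^{p}$ by $|\Psi|^{-p}|\Psi+V|^{p}$, and since $p+2m-1=2n$ the accumulated powers of $|\Psi|$ cancel, reducing the whole problem to the matrix analogue of the scalar identity $\int_0^\infty e^{-v}(\psi+v)^{p}\,dv=\Gamma(p+1)\sum_{j=0}^{p}\psi^{j}/j!$, namely $\int_{V>0}\etr(-V)|\Psi+V|^{p}(dV)=\Q\Gamma_m(2n)\sum_{\kappa:\,k_1\le p}C_\kappa(\Psi)/k!$. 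Granting this, $P(W>\Delta)=\etr(-\Psi)\sum_{\kappa:\,k_1\le p}C_\kappa(\Psi)/k!$, and since a partition with at most $m$ parts and $k_1\le p$ satisfies $0\le k\le mp=m(2n-2m+1)$ (so the inner sum is exactly the $\widehat{\sum_{\kappa}}$ of the statement), replacing $\Psi$ by $2\Sigma^{-1}\Delta$ gives precisely the claimed expression.

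To prove the key identity I would write $|\Psi+V|^{p}=|V|^{p}\,|I_m+V^{-1/2}\Psi V^{-1/2}|^{p}$ and use the \emph{terminating} zonal expansion $|I_m+X|^{p}=\sum_{\kappa:\,k_1\le p}\frac{(-1)^{k}(-p)_\kappa}{k!}C_\kappa(X)$, obtained from Corollary \ref{Coroz} (${_1F_0}(a;X)=|I_m-X|^{-a}$) by analytic continuation in $a$ to $a=-p$: this is legitimate because $(-p)_\kappa$ vanishes exactly when $k_1>p$, so for $a=-p$ both sides are polynomials in the eigenvalues of $X$ and agree on an open set. Substituting this finite expansion and integrating term by term via the second integral formula of Theorem \ref{th2.3} with $a=2n$, $Z=I_m$, $Y=\Psi$ — whose hypothesis $\Re(2n)>2(m-1)+k_1$ is precisely the condition $k_1\le p$, which also pins down the absolute convergence — gives $\int_{V>0}\etr(-V)|V|^{p}C_\kappa(V^{-1}\Psi)(dV)=\frac{(-1)^{k}\Q\Gamma_m(2n)}{(-2n+2m-1)_\kappa}C_\kappa(\Psi)$. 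The point that makes the coefficients collapse is that $-2n+2m-1=-p$, so $(-2n+2m-1)_\kappa=(-p)_\kappa$ cancels the $(-p)_\kappa$ coming from the expansion and each surviving term is just $C_\kappa(\Psi)/k!$, yielding $\Q\Gamma_m(2n)\sum_{\kappa:\,k_1\le p}C_\kappa(\Psi)/k!$. The main obstacle is thus justifying the terminating expansion of $|I_m+X|^{p}$ together with the term-by-term integration; once those are secured the remainder is bookkeeping with the $|\Psi|^{\pm 2n}/\Q\Gamma_m(2n)$ factors and the cyclic identities for $C_\kappa$ and $\etr$.
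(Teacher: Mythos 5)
Your proposal is correct and follows essentially the same route as the paper: the substitution $W=\Delta^{1/2}X\Delta^{1/2}$ plus term-by-term integration via Corollary \ref{coro2.1} for $P(W<\Delta)$, and for $P(W>\Delta)$ the shift $W=\Delta^{1/2}(I+X)\Delta^{1/2}$ combined with the terminating ${_1F_0}$ expansion of $|I+X^{-1}|^{2n-2m+1}$ from Corollary \ref{Coroz} and the second integral formula of Theorem \ref{th2.3}, with the same cancellation of $(-2n+2m-1)_\kappa$. The only cosmetic difference is your extra normalization $T=\Psi^{-1/2}V\Psi^{-1/2}$, which the paper avoids by keeping $2\Delta^{1/2}\Sigma^{-1}\Delta^{1/2}$ inside the exponential as the parameter $Z$ of Theorem \ref{th2.3}.
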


\begin{proof} By means of the density function of $W$ in (\ref{equaaa}), we have
$$
P(W<\Delta)=\frac{2^{2mn}}{\Q\Gamma_m(2n)|\Sigma|^{2n}}\int_{0<W<\Delta}\exp(\Rtr(-2\Sigma^{-1}W))|W|^{2n-2m+1}(dW)$$
Let $W=\Delta^{{1/2}}X\Delta^{{1/2}}$. Then
$(dW)=|\Delta|^{2m-1}dX$. By Corollary \ref{coro2.1}, we get that
\begin{align*}
&P(W<\Delta)=P(X<I)\\
&=\frac{2^{2mn}}{\mathbb{Q}\Gamma_m(2n)|\Sigma|^{2n}}\int_{0<X<I}\exp(\Rtr(-2\Sigma^{-1}
\Delta^{{1/2}}X\Delta^{{1/2}})|\Delta|^{2n-2m+1}
|X|^{2n-2m+1}|\Delta|^{2m-1}(dX)\\
&=\frac{2^{2mn}}{\mathbb{Q}\Gamma_m(2n)}\frac{|\Delta|^{2n}}{|\Sigma|^{2n}}\int_{0<X<I}\etr(-2\Sigma^{-1}
\Delta^{{1/2}}X\Delta^{{1/2}}))|X|^{2n-2m+1}(dX)\\
&=\frac{2^{2mn}}{\mathbb{Q}\Gamma_m(2n)}\frac{|\Delta|^{2n}}{|\Sigma|^{2n}}\int_{0<X<I}\sum_{k=0}^{\infty}
\sum_{|\kappa|=k}\frac{C_{\kappa}(-2\Delta^{{1/2}}\Sigma^{-1}\Delta^{{1/2}}X)}{k!}|X|^{2n-2m+1}(dX)\\
&=\frac{2^{2mn}}{\mathbb{Q}\Gamma_m(2n)}\frac{|\Delta|^{2n}}{|\Sigma|^{2n}}\sum_{k=0}^{\infty}\sum_{|\kappa|=k}
\frac{\mathbb{Q}\Gamma_m(2n)\mathbb{Q}\Gamma_m(2m-1)}{\mathbb{Q}\Gamma(2n+2m-1)}\frac{C_{\kappa}
(-2\Sigma^{-1}\Delta)}{k!}\frac{(2n)_{\kappa}}{(2n+2m-1)_{\kappa}}\\
&=\frac{2^{2mn}\mathbb{Q}\Gamma_m(2m-1)}{\mathbb{Q}\Gamma_m(2n+2m-1)}
\frac{|\Delta|^{2n}}{|\Sigma|^{2n}}{_1F_1}(2n,2n+2m-1,-2\Sigma^{-1}\Delta).
\end{align*}
Note that
$$
P(W>\Delta)=\frac{2^{2mn}}{\Q\Gamma_m(2n)|\Sigma|^{2n}}\int_{W>\Delta}\etr(-2\Sigma^{-1}W)|W|^{2n-2m+1}(dW).
$$
Put $W=\Delta^{{1/2}}(I+X)\Delta^{{1/2}}$. Then
$dW=|\Delta|^{2m-1}(dX)$ and so
\begin{align*}
&P(W>\Delta)\\
&=\frac{2^{2mn}|\Delta|^{2n}}{\Q\Gamma_m(2n)|\Sigma|^{2n}}\int_{X>0}
\etr(-2\Sigma^{-1}\Delta)\etr(-2\Sigma^{-1}\Delta^{{1/2}}X\Delta^{{1/2}})|I+X|^{2n-2m+1}(dX)\\
&=\frac{2^{2mn}|\Delta|^{2n}}{\Q\Gamma_m(2n)|\Sigma|^{2n}}\int_{X>0}\etr(-2\Sigma^{-1}\Delta)
\times\etr(-2\Sigma^{-1}\Delta^{{1/2}}X\Delta^{{1/2}})\\
&\hspace{4cm} \times|I+X^{-1}|^{2n-2m+1}|X|^{2n-2m+1}(dX).
\end{align*}
Since
\begin{align*}
|I+X^{-1}|^{2n-2m+1}=\,&{_1F_0}(-2n+2m-1,-X^{-1})\\
=&\sum_{k=0}^{m(2n-2m+1)}\widehat{\sum_{\kappa}}\frac{[-(2n-2m+1)]_\kappa C_{\kappa}(X^{-1})(-1)^k}{k!}
\end{align*}
by Corollary \ref{Coroz}, it follows from Theorem \ref{th2.3} that
\begin{align*}
\int_{X>0}\etr(-2&\Sigma^{-1}\Delta)\etr(-2\Sigma^{-1}\Delta^{{1/2}}X\Delta^{{1/2}})
|I+X^{-1}|^{2n-2m+1}|X|^{2n-2m+1}(dX)\\
=&\sum_{k=0}^{m(2n-2m+1)}\widehat{\sum}_{\kappa}\frac{(-1)^k[-2n+2m-1]_\kappa}{k!}\\
&\
\times\int_{X>0}\etr(-2\Sigma^{-1}\Delta^{{1/2}}X\Delta^{{1/2}})|X|^{2n-2m+1}C_{\kappa}(X^{-1})
(dX)\\
=&\sum_{k=0}^{m(2n-2m+1)}\widehat{\sum}_{\kappa}\frac{\Q\Gamma_{2m}(2n)}{k!}
|2\Delta^{{1/2}}\Sigma^{-1}\Delta^{{1/2}}|^{-2n}C_\kappa(2\Sigma^{-1}\Delta).
\end{align*}

Therefore, we obtain the result.
\end{proof}
\begin{corollary}
Let $W \sim \Q W_m(n,\Sigma)~~(n\geqslant m)$ and let
$\lambda_{{\mathrm{max}}}$ and $\lambda_{{\mathrm{min}}}$ be the
largest and smallest eigenvalue of $W$ respectively. Then
distribution of $\lambda_{{\mathrm{max}}}$ (resp.
$\lambda_{{\mathrm{min}}}$) is given by
\begin{align}
\label{corx}P(\lambda_{{\mathrm{max}}}<x)=&\frac{\mathbb{Q}\Gamma_m(2m-1)}{\mathbb{Q}\Gamma_m(2n+2m-1)}
\frac{x^{2mn}}{|\Sigma|^{2n}}{_1F_1}(2n,2n+2m-1,-2x\Sigma^{-1})\\
\label{cory}P(\lambda_{{\mathrm{min}}}>x)=&\sum_{k=0}^{m(2n-2m+1)}\widehat{\sum_{\kappa}}
\frac{C_{\kappa}(2x\Sigma^{-1})}{k!}\etr(-2x\Sigma^{-1}).
\end{align}
The density of $\lambda_{{\mathrm{max}}}$ (resp.
$\lambda_{{\mathrm{min}}}$) is obtained by differentiating
(\ref{corx}) (resp. (\ref{cory})) with respect to $x$.
\end{corollary}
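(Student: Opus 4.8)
The plan is to obtain both distribution formulas as immediate specializations of Theorem~\ref{th4.1}, with the positive definite matrix $\Delta$ taken to be the scalar matrix $xI_m$, and then to get the densities by differentiating in $x$. The first step is to translate the eigenvalue events into matrix inequalities: since $W\sim\Q W_m(n,\Sigma)$ is Hermitian and almost surely positive definite, its eigenvalues $\lambda_1,\dots,\lambda_m$ are real and positive, so for every $x>0$
$$
\{\lambda_{\mathrm{max}}<x\}=\{xI_m-W>0\}=\{W<xI_m\},\qquad
\{\lambda_{\mathrm{min}}>x\}=\{W-xI_m>0\}=\{W>xI_m\}.
$$
For $x\le 0$ both sides of (\ref{corx}) and (\ref{cory}) are checked directly, so it suffices to treat $x>0$.

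Next I would apply Theorem~\ref{th4.1} with $\Delta=xI_m$, which is admissible since $xI_m>0$. For $\lambda_{\mathrm{max}}$ this gives $P(\lambda_{\mathrm{max}}<x)=P(W<xI_m)$ equal to the ${_1F_1}$-expression of Theorem~\ref{th4.1} with $\Delta$ replaced by $xI_m$; using $|xI_m|=x^m$ (Lemma~\ref{Lax}(2) applied to $T=\sqrt{x}\,I_m$), so $|\Delta|^{2n}=x^{2mn}$, and noting that the matrix argument is $-2\Sigma^{-1}\Delta=-2x\Sigma^{-1}$, one obtains (\ref{corx}). For $\lambda_{\mathrm{min}}$, substituting $\Delta=xI_m$ into the second formula of Theorem~\ref{th4.1} and using that the zonal polynomials are homogeneous of degree $k$, so $C_\kappa\big(2\Sigma^{-1}(xI_m)\big)=C_\kappa(2x\Sigma^{-1})$, together with $\etr\big(-2\Sigma^{-1}(xI_m)\big)=\etr(-2x\Sigma^{-1})$, one obtains (\ref{cory}).

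Finally, the densities come from differentiating (\ref{corx}) and (\ref{cory}) with respect to $x$. For $\lambda_{\mathrm{min}}$ this is immediate, since $\widehat{\sum}$ ranges over only finitely many partitions and the right side of (\ref{cory}) is a finite combination of the smooth functions $x^{k}\etr(-2x\Sigma^{-1})$. For $\lambda_{\mathrm{max}}$ one differentiates the ${_1F_1}$-series term by term: by homogeneity the coefficient of $x^{k}$ in ${_1F_1}(2n,2n+2m-1,-2x\Sigma^{-1})$ is $\sum_{|\kappa|=k}\frac{(2n)_\kappa}{(2n+2m-1)_\kappa}\frac{(-2)^{k}C_\kappa(\Sigma^{-1})}{k!}$, so $x\mapsto{_1F_1}(2n,2n+2m-1,-2x\Sigma^{-1})$ is an ordinary power series in the real variable $x$ whose radius of convergence is infinite by the convergence property of ${_pF_q}$ with $p\le q$ recorded in the Remark of \S4; hence it, and therefore (\ref{corx}) as a whole (a product with $x^{2mn}$), is differentiable termwise in $x$. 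I expect this last point --- recasting the matrix hypergeometric series as a scalar power series in $x$ in order to legitimately differentiate it term by term --- to be the only step requiring a word of justification; all the rest is a direct substitution into Theorem~\ref{th4.1}.
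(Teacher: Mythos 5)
Your proposal is correct and follows exactly the paper's own (one-line) argument: translate the eigenvalue inequalities into the matrix inequalities $W<xI_m$, $W>xI_m$ and substitute $\Delta=xI_m$ into Theorem~\ref{th4.1}, using $|xI_m|=x^m$ and the degree-$k$ homogeneity of $C_\kappa$; the extra care you take with term-by-term differentiation of the ${_1F_1}$ series is a welcome addition the paper omits. Note only that a literal substitution into Theorem~\ref{th4.1} carries along the factor $2^{2mn}$, which the paper's stated formula (\ref{corx}) drops --- a discrepancy internal to the paper rather than a flaw in your argument.
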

\begin{proof}
The inequality $\lambda_{{\mathrm{max}}}<x$ (resp.
$\lambda_{{\mathrm{min}}}>x$) is equivalent to $W<xI_m$ (resp.
$W>xI_m$). The assertions follow by taking $\Delta=xI_m$ in Theorem
\ref{th4.1}.
\end{proof}

\noindent{\bf{Acknowledgement.}}
The authors are grateful to the referee for his (or her) helpful comments and kindly pointing out many typos in
the paper.
\vspace{2mm}

\end{document}